\date{\today}
\author{Nicolas Tholozan}
\address{University of Luxembourg, Campus Kirchberg \\
6, rue Richard Coudenhove-Kalergi\\
L-1359 Luxembourg}
\email{nicolas.tholozan@uni.lu}
\title[The Volume of complete AdS $3$-manifolds]{The Volume of complete anti-de Sitter $3$-manifolds}
\begin{document}

\maketitle

\begin{abstract}
Up to a finite cover, closed anti-de Sitter $3$-manifolds are quotients of $\SO_0(2,1)$ by a discrete subgroup of $\SO_0(2,1) \times \SO_0(2,1)$ of the form
\[j\times \rho(\Gamma)~,\]
where $\Gamma$ is the fundamental group of a closed oriented surface, $j$ a Fuchsian representation and $\rho$ another representation which is ``strictly dominated'' by $j$.

Here we prove that the volume of such a quotient is proportional to the sum of the Euler classes of $j$ and $\rho$. As a consequence, we obtain that this volume is constant under deformation of the anti-de Sitter structure. Our results extend to (not necessarily compact) quotients of $\SO_0(n,1)$ by a discrete subgroup of $\SO_0(n,1) \times \SO_0(n,1)$.
\end{abstract}

\section*{Introduction}

\subsection*{Volume of non-Riemannian Clifford--Klein forms}

It is a well-known fact that the covolume of a lattice $\Gamma$ in a simple Lie group $G$ is strongly rigid in the following sense: for any two embeddings $j$ and $j': \Gamma \to G$ as a lattice, the quotients $j(\Gamma) \backslash G$ and $j'(\Gamma) \backslash G$ have the same volume (with respect to some fixed Haar measure).

This fact is actually a by-product of the famous rigidity theorems of Mostow \cite{Mostow68} and Margulis \cite{Margulis91}. The only situation that is not covered by those theorems is when $G$ is (a covering of) $\PSL(2,\R)$, in which case the result follows from the Gauss--Bonnet formula (the covolume of $j(\Gamma)$ being proportional to the volume of $j(\Gamma) \backslash \H^2$).

Comparatively, very little is known about the volume rigidity of \emph{non-Riemannian Clifford--Klein forms}, i.e. manifolds of the form $\Gamma \backslash G/H$, where $H$ is a closed non-compact subgroup of $G$ and $\Gamma$ a discrete subgroup of $G$. In this case, there is no analog of Mostow's and Margulis's rigidity theorems. The Gauss--Bonnet formula can only be generalized for manifolds of even dimension. In some interesting cases, it actually gives an obstruction to the existence of compact Clifford--Klein forms.

To illustrate the difficulties that arrise, let us focus on the case of compact anti-de Sitter manifolds (i.e. Lorentz manifolds of constant negative sectional curvature). Those manifolds have (up to finite cover) the form $\Gamma \backslash \AdS_n$, where $\AdS_n$ is the homogeneous space $\SO_0(n-1,2)/\SO_0(n-1,1)$ and $\Gamma$ is a discrete subgroup of $\SO_0(n-1,2)$ acting freely, properly discontinously and cocompactly on $\AdS_n$.

When $n$ is even, the Chern--Gauss--Bonnet formula implies that the volume of $\Gamma \backslash \AdS_n$ should be proportional to its Euler characteristic. More than a rigidity result, this actually implies the non-existence of such a group $\Gamma$, simply because the Euler characteristic of a closed Lorentz manifold is always~$0$.

When $n= 2k+1$ is odd and greater than $4$, the only known examples of such groups $\Gamma$ are lattices in $\U(k,1) \subset \SO_0(2k,2)$ (these examples were given by Kulkarni in \cite{Kulkarni81}). Such a group $\Gamma$ cannot be continuously deformed outside $\U(k,1)$ (see \cite{KasselThese}, paragraph 6.1.1) and the volume of $\Gamma \backslash \AdS_{2k+1}$ is thus locally rigid. It is conjectured that these are the only examples, which would imply a srong volume rigidity.

In contrast, compact quotients of $\AdS_3$ are known to have a rich deformation space (see \cite{Goldman85}, \cite{Salein00}, \cite{KasselThese} and \cite{Tholozan3}). In a recent survey by anti-de Sitter geometers, it is asked whether the volume of closed anti-de Sitter $3$-manifolds is constant under these deformations (\cite{QuestionsAdS}, question 2.3). The primary purpose of this paper is to answer this question.

The specificity of anti-de Sitter manifolds in dimension $3$ is that $\AdS_3$ can be identified with $\SO_0(2,1)$ with its Killing metric and $\Isom_0(\AdS_3)$ with $\SO_0(2,1) \times \SO_0(2,1)$ acting of $\SO_0(2,1)$ by left and right multiplication. By work of Kulkarni--Raymond \cite{KulkarniRaymond85} and Klingler \cite{Klingler96}, compact anti-de Sitter $3$-manifolds have (up to a finite cover) the form
\[j\times \rho(\Gamma) \backslash \SO_0(2,1)~,\]
where $\Gamma$ is the fundamental group of a closed oriented surface and $j$ and $\rho$ are two representations of $\Gamma$ into $\SO_0(2,1)$, with $j$ discrete and faithful.

We will prove that the volume of such a manifold is proportional to the sum of the \emph{Euler classes} of $j$ and $\rho$, implying local rigidity of the volume. More generally, we will describe the volume of quotients of $\SO_0(n,1)$ by a finitely generated subgroup of $\SO_0(n,1) \times \SO_0(n,1)$.

\subsection*{Statement of the results}

The action of $\SO_0(n,1) \times \SO_0(n,1)$ on $\SO_0(n,1)$ by left and right multiplication preserves a volume form on $\SO_0(n,1)$. This allows to define the volume of quotients of $\SO_0(n,1)$ by discrete subgroups of $\SO_0(n,1) \times \SO_0(n,1)$ acting properly discontinuously.

Kobayashi \cite{Kobayashi93} and Kassel \cite{Kassel08}, generalizing the work of Kulkarni--Raymond \cite{KulkarniRaymond85}, proved that finitely generated subgroups of $\SO_0(n,1) \times \SO_0(n,1)$ acting properly discontinously on $\SO_0(n,1)$ have (up to finite index and up to switching the factors) the form
\[j\times \rho(\Gamma)~,\]
where $\Gamma$ is a finitely generated group and $j$ and $\rho$ two representations of $\Gamma$ into $\SO_0(n,1)$, with $j$ discrete and faithful (see theorem \ref{t:KobayashiKassel}). Here, we will compute the volume of these quotients.

Recall that $\SO_0(n,1)$ is the group of orientation preserving isometries of the hyperbolic space $\H^n$. If $j$ is discrete and faithful, then $j(\Gamma)$ acts properly discontinuously on $\H^n$. If, moreover, this action is cocompact, then one can define the volume of a representation $\rho: \Gamma \to \SO_0(n,1)$ by
\[\Vol(\rho) = \int_{j(\Gamma) \backslash \H^n} f^*\vol^{\H^n}~,\]
where $f$ is any piecewise smooth $(j,\rho)$-equivariant map from $\H^n$ to $\H^n$. Here $\vol^{\H^n}$ denotes the volume form associated to the hyperbolic metric on $\H^n$ (see section \ref{ss:Conventions} for precisions on its normalization). In particular,
\[\Vol(j) = \Vol\left(j(\Gamma) \backslash \H^n\right)~.\]
This definition can be extended (with some technicality) to representations of a non-cocompact hyperbolic lattice (see definition \ref{d:VolumeRepresentation} and the discussion in section \ref{ss:VolumeRepresentation}).

Finally, let us denote by $\V_n$ the volume of $\SO(n)$ with respect to its bi-invariant volume form (see section \ref{ss:Conventions} for the normalization of this volume form). With these definitions and notations, we can now state our main result:

\begin{MonThm} \label{t:VolumeGKQuotients}
Let $\Gamma$ be a lattice in $SO_0(n,1)$, $j: \Gamma \to \SO_0(n,1)$ the inclusion and $\rho$ another representation of $\Gamma$ into $\SO_0(n,1)$, such that $j\times \rho(\Gamma)$ acts properly discontinuously on $\SO_0(n,1)$. 

Then \[\Vol\left( j\times \rho(\Gamma) \backslash \SO_0(n,1) \right) = \V_n \left( \Vol(j) + (-1)^n \Vol(\rho) \right)~.\]
\end{MonThm}

\begin{rmk}
We will give later a slightly stronger version of this theorem (Theorem \ref{t:MainThmBis}).
\end{rmk}

The proof will rely on a theorem of Gu\'eritaud--Kassel \cite{GueritaudKassel} which gives a necessary and sufficient condition for $j\times \rho(\Gamma)$ to act properly discontinuously on $\SO_0(n,1)$ (Theorem \ref{t:GueritaudKassel}). This theorem allows a deep understanding of the geometry of the quotient (see section \ref{ss:DescriptionGKQuotients}). Gu\'eritaud--Kassel's condition is open, meaning that if $j_0\times \rho_0(\Gamma)$ acts properly discontinuously on $\SO_0(n,1)$, then $j\times \rho(\Gamma)$ still acts properly discontinuously for $(j,\rho)$ in some neighbourhood of $(j_0,\rho_0)$. We will say that the volume of $j_0 \times \rho_0(\Gamma) \backslash \SO_0(n,1)$ is \emph{locally rigid} if the volume of $j \times \rho(\Gamma) \backslash \SO_0(n,1)$ is constant for $(j,\rho)$ in a neighbourhood of $(j_0,\rho_0)$.

Using rigidity results of Besson--Courtois--Gallot \cite{BCG07} and Kim--Kim \cite{KimKim13} for the volume of representations of a lattice in $\SO_0(n,1)$, we obtain the following corollary:

\begin{MonCoro} \label{c:VolumeRigidity}
Let $\Gamma$ be a lattice in $\SO_0(n,1)$, $j:\Gamma \to \SO_0(n,1)$ the inclusion and $\rho: \Gamma \to \SO_0(n,1)$ a representation of $\Gamma$ into $\SO_0(n,1)$ such that $j\times \rho(\Gamma)$ acts properly discontinuously on $\SO_0(n,1)$.
Then the volume of $j\times \rho(\Gamma) \backslash \SO_0(n,1)$ is rigid, unless $n=2$ and $\Gamma$ is not cocompact.
\end{MonCoro}

The case when $n=2$ corresponds to closed anti-de Sitter $3$-manifolds and thus answers Question 2.3 in \cite{QuestionsAdS}. Interestingly, in that case, the volume is not rigid when $\Gamma$ is not cocompact. We will prove the following:

\begin{MonThm} \label{t:NonRigidity}
Let $\Gamma$ be a non-cocompact lattice in $\SO_0(2,1)$ and $j:\Gamma \to \SO_0(2,1)$ the inclusion. Then there exists a continuous family $\left(\rho_t\right)_{-1<t<1}$ of representations of $\Gamma$ into $\SO_0(2,1)$ such that $j\times \rho_t(\Gamma)$ acts properly discontinuously on $\SO_0(2,1)$ and such that 
\[\Vol\left(j\times \rho_t(\Gamma) \backslash \SO_0(2,1) \right)\]
takes all the values in the open interval $(0, 4\pi \Vol(j))$ when $t$ varies.
\end{MonThm}

\subsection*{Other approaches to theorem \ref{t:VolumeGKQuotients}}

Theorem \ref{t:VolumeGKQuotients} was originally proven in the author's thesis \cite{TholozanThese} for $n=2$ and for compact quotients. Since then, it attracted some interest, and several people found other approaches to prove it in this setting.

Jean-Marc Schlenker mentioned to us that the volume rigidity could probably be obtained using the anti-de Sitter version of the Schl\"afli formula (see proposition \ref{p:SchlafliFormula}).

Labourie found an interpretation of the volume of anti-de Sitter $3$-manifolds as a Chern--Simons invariant \cite{LabourieMSRI}. He noticed that the volume could be understood as a secondary characteristic class of the tangent bundle provided with two natural flat connections coming from left and right parallelism on $\SO_0(2,1)$. The general Chern--Simons theory then directly gives the rigidity of the volume. This approach may generalize to quotients of $\SO_0(n,1)$, with $n \geq 3$.
 
Finally, Alessandrini and Li recently found another proof of this theorem \cite{AlessandriniLi15}. Their computation seems similar to ours but uses an interesting Higgs bundle interpretation of anti-de Sitter $3$-manifolds.

\subsection*{Organization of the paper}

The next section of this paper introduces the background of our work and gives more details about our results. Section \ref{s:AdS} specializes our results in the anti-de Sitter case and proves theorem \ref{t:NonRigidity}. Section \ref{s:Rigidity} gives some precisions on the definition of the volume of a representation and proves corollary \ref{c:VolumeRigidity}. Section \ref{s:ComputationVolume} contains the core of the proof of Theorem \ref{t:GueritaudKassel}. Finally, in section \ref{s:OpenQuestions}, we mention some open questions raised by this work.

\subsection*{Acknowledgements}

Part of this work was completed during my PhD at the university of Nice-Sophia Antipolis. I would like to thank my advisor, Sorin Dumitrescu, for is constant support during my thesis. I am also thankful to François Labourie for his interest in this work and for explaining me his approach with Chern--Simons theory, and to Fanny Kassel and Ravi Kulkarni for some thoughtful remarks after a talk I gave on this topic.

\section{Volume of Gu\'eritaud--Kassel's quotients} \label{s:Background}

\subsection{Hausdorf quotients of $\SO_0(n,1)$}

Let us start by reviewing the previous results describing quotients of $\SO_0(n,1)$ by a discrete subgroup of $\SO_0(n,1) \times \SO_0(n,1)$. From now on, we will name these \emph{Gu\'eritaud--Kassel's quotients}, in reference to Gu\'eritaud--Kassel's theorem (Theorem \ref{t:GueritaudKassel}) that completed their description.

Gu\'eritaud--Kassel's quotients are examples of \emph{Clifford--Klein forms}, that is, quotients of a homogeneous space by a discrete subgroup of transformations acting properly discontinuously. In the case we are interested in, the homogeneous space is the Lie group $\SO_0(n,1)$, the connected component of the identity in the group of linear transformations of $\R^{n+1}$ preserving a quadratic form of signature $(n,1)$. Its transformation group is the group $\SO_0(n,1) \times \SO_0(n,1)$ acting by left and right multiplication:
\[(g,h) \cdot x = g x h^{-1}~.\]

This action preserves a natural pseudo-Riemannian metric on $\SO_0(n,1)$ called the \emph{Killing metric}. To define it, recall that the tangent space of $\SO_0(n,1)$ at the identity is the Lie algebra $\so(n,1)$ of $(n+1) \times (n+1)$ matrices $A$ satisfying
\[\transp{A} \I_{n,1} + \I_{n,1} A = 0~,\]
where
\[\I_{n,1} = 
\left( \begin{matrix}
1 & 0 & \ldots & \ & 0 \\
0 \\
\vdots & \ & \ddots & \ & \vdots \\
\ & \ & \ & 1 & 0\\
0 &\ & \ldots & 0 & -1
\end{matrix} \right)~.\]

\begin{defi}
The Killing metric on $\SO_0(n,1)$ is the left invariant pseudo-Riemmanian metric $\kappa$ defined on $\so(n,1)$ by
\[\kappa(A,B) = \frac{1}{2}\Tr(AB)~.\]
\end{defi}

The adjoint action of $\SO_0(n,1)$ on $\so(n,1)$ preserves the Killing form. Therefore, $\kappa$ is also invariant under right multiplication. Since $\SO_0(n,1)$ is simple, this actually characterizes the Killing metric.

\begin{prop}[Cartan]
The Killing metric is, up to scaling, the unique pseudo-Riemannian metric on $\SO_0(n,1)$ which is invariant by both left and right multiplication. Moreover, $\SO_0(n,1) \times \SO_0(n,1)$ (acting by left and right multiplication) identifies with (an index $4$ subgroup of) the isometry group of $(\SO_0(n,1), \kappa)$.\\
\end{prop}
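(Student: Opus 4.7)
The plan is to reformulate both assertions as statements about the Lie algebra $\so(n,1)$ and exploit its simplicity. A left-invariant pseudo-Riemannian metric on $\SO_0(n,1)$ is determined by its value at the identity, i.e.\ by a non-degenerate symmetric bilinear form $B$ on $\so(n,1)$. Differentiating conjugation at the identity shows that right-invariance of the metric is equivalent to $\mathrm{Ad}$-invariance of $B$: one has $B(\mathrm{Ad}(g)X, \mathrm{Ad}(g)Y) = B(X,Y)$ for every $g \in \SO_0(n,1)$. So the first assertion reduces to the purely algebraic claim that the space of $\mathrm{Ad}$-invariant non-degenerate symmetric bilinear forms on $\so(n,1)$ is one-dimensional, generated by the trace form $(X,Y) \mapsto \tfrac{1}{2}\Tr(XY)$.

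To prove this, given another such form $B$, I would define $T \in \mathrm{End}(\so(n,1))$ by $B(X,Y) = \tfrac{1}{2}\Tr(T(X)\, Y)$. Invariance of both bilinear forms forces $T$ to commute with $\mathrm{Ad}(g)$ for every $g$, and therefore with $\mathrm{ad}(X)$ for every $X \in \so(n,1)$, i.e., $T$ is an intertwiner of the adjoint representation with itself. Since $\so(n,1)$ is a simple real Lie algebra (for $n \geq 2$) whose complexification is absolutely simple, Schur's lemma forces $T$ to be a scalar multiple of the identity, and $B$ is therefore proportional to the trace form. The exceptional low-dimensional isomorphism $\so(3,1) \cong \mathfrak{sl}(2,\mathbb{C})_{\mathbb{R}}$ deserves a separate argument.

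For the isometry group, the action $(g,h)\cdot x = g x h^{-1}$ preserves $\kappa$ by bi-invariance, with kernel the diagonal embedding of $Z(\SO_0(n,1))$. Any isometry of a connected pseudo-Riemannian manifold is determined by its image at a single point and its differential there, so after composing with a left translation it suffices to understand isometries fixing the identity $e$. The $\kappa$-geodesics through $e$ are exactly the one-parameter subgroups, hence such an isometry must permute one-parameter subgroups, and its differential at $e$ is $\pm 1$ times a Lie algebra automorphism preserving $\kappa_e$. Enumerating the cosets beyond $\mathrm{Ad}(\SO_0(n,1))$, one finds three extra: the inversion $g \mapsto g^{-1}$ (whose differential at $e$ is $-\mathrm{id}$), a representative of the outer automorphism class $\mathrm{Out}(\SO_0(n,1)) \cong \mathbb{Z}/2$ (induced by conjugation by an element of $O(n,1) \setminus \SO_0(n,1)$), and their composition. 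This accounts for exactly an index-$4$ extension.

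The main obstacle I expect is making the last step rigorous: that all isometries fixing $e$ are exhausted by $\mathrm{Ad}(\SO_0(n,1))$ together with the three extra cosets described. This requires a careful analysis of the automorphism group of the pair $(\so(n,1), \kappa_e)$, ruling out any orthogonal transformations that do not integrate to isometries of $(\SO_0(n,1),\kappa)$, and correctly pinning down $\mathrm{Out}(\SO_0(n,1))$ as $\mathbb{Z}/2$.
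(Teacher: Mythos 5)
The paper offers no proof of this proposition beyond the sentence preceding it (``Since $\SO_0(n,1)$ is simple, this actually characterizes the Killing metric''), which is shorthand for exactly your Schur-type argument; so for the uniqueness assertion you are following the intended route, and your execution is correct wherever $\so(n,1)$ is absolutely simple. Your instinct to single out $n=3$ is better founded than you suggest: for $\so(3,1)\cong\mathfrak{sl}(2,\mathbb{C})$ viewed as a real Lie algebra the commutant of the adjoint representation is $\mathbb{C}$ rather than $\R$, and the real and imaginary parts of the complex Killing form span a two-dimensional space of bi-invariant pseudo-Riemannian metrics on $\SO_0(3,1)\cong\PSL(2,\mathbb{C})$. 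So no ``separate argument'' can close that case -- the uniqueness statement is simply false for $n=3$, and the proposition should carry the hypothesis $n\neq 3$. (This does not affect the paper, whose volume form is fixed by an explicit normalization rather than by this uniqueness, and whose main interest is $n=2$.)

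For the isometry group, the gap is real but sits earlier than where you place it. From ``an isometry fixing $e$ sends geodesics through $e$ to geodesics through $e$, i.e.\ permutes one-parameter subgroups'' one cannot conclude that its differential is $\pm$ a Lie algebra automorphism: \emph{any} $\kappa_e$-orthogonal linear map $\phi$ yields a local map $\exp\circ\,\phi\circ\exp^{-1}$ that permutes one-parameter subgroups and preserves the metric at $e$, so this condition carries no information beyond linearity. The constraint that actually pins down the isotropy group is preservation of the curvature tensor, which for a bi-invariant metric is $R(X,Y)Z=-\frac{1}{4}[[X,Y],Z]$; conversely, since $(\SO_0(n,1),\kappa)$ is a pseudo-Riemannian symmetric space, an orthogonal map preserving $R_e$ does integrate to an isometry. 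One must then show that the $\kappa_e$-orthogonal maps preserving $[[\cdot,\cdot],\cdot]$ are exactly $\pm\mathrm{Aut}(\so(n,1))$, and compute $\mathrm{Aut}(\so(n,1))/\mathrm{Ad}(\SO_0(n,1))$ to get the index $4$. That exhaustion is the actual content of Cartan's theorem and is what your sketch leaves open, as you acknowledge; as it stands the second half of your argument only exhibits an index-$4$ extension inside the isometry group, not that it is the whole isometry group.
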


We now turn to the description of the discrete subgroups of $\SO_0(n,1) \times \SO_0(n,1)$ acting properly discontinuously on $\SO_0(n,1)$. Recall that $\SO_0(n,1)$ can be seen as the group of orientation preserving isometries of the hyperbolic space $\H^n$.

\begin{CiteThm}[Kulkarni--Raymond \cite{KulkarniRaymond85}, Kobayashi \cite{Kobayashi93}, Kassel \cite{Kassel08}] \label{t:KobayashiKassel}
Let $\Gamma$ be a discrete and torsion-free subgroup of $\SO_0(n,1)\times \SO_0(n,1)$. Denote by $j$ (resp. $rho$) the projection of $\Gamma$ on the first (resp. second) factor. If $\Gamma$ acts properly discontinuously on $\SO_0(n,1)$, then, up to switching the factors, $j$ is discrete and faithful.
\end{CiteThm}

\begin{rmk}
This result was initially proven by Kulkarni and Raymond in the case $n=2$ \cite{KulkarniRaymond85}. Kobayashi then proved faithfulness and Kassel proved discreteness in a more general setting.
\end{rmk}

Gu\'eritaud and Kassel then gave a precise criterion for $j\times \rho(\Gamma)$ to act properly discontinuously. In order to state it, let us introduce some terminology first.

\begin{defi}
Let $\Gamma$ be a group, $j: \Gamma \to \SO_0(n,1)$ a discrete and faithful representation and $\rho: \Gamma \to \SO_0(n,1)$ another representation. We say that $j$ \emph{strictly dominates} $\rho$ if there exists a $(j,\rho)$-equivariant map from $\H^n$ to $\H^n$ which is contracting (i.e. $\lambda$-Lipschitz for some $\lambda < 1$).
\end{defi}

\begin{CiteThm}[Gu\'eritaud--Kassel \cite{GueritaudKassel}]\label{t:GueritaudKassel}
Let $\Gamma$ be a geometrically finite subgroup of $\SO_0(n,1)$, $j:\Gamma \to \SO_0(n,1)$ the inclusion and $\rho$ another representation of $\Gamma$ into $\SO_0(n,1)$. Then the group $(j\times \rho)(\Gamma)$ acts (freely and) properly discontinuously on $\SO_0(n,1)$ if and only if, up to switching the factors, $\rho$ is strictly dominated by $j$.
\end{CiteThm}

\begin{rmk}
The fact that the strict domination is a sufficient condition follows from the Benoist--Kobayashi properness criterion. It seems to have been explicitly stated for the first time by Salein \cite{Salein00}.
\end{rmk}

Gu\'eritaud--Kassel's theorem does not hold if we remove the hypothesis that $\Gamma$ is geometrically finite. However, we conjecture that, if $\Gamma$ acts properly discontinuously and cocompactly on $\SO_0(n,1)$ with finite covolume, then $j(\Gamma)$ must be geometrically finite (see section \ref{sss:FinitelyGenerated}). \\

\subsection{Volume}

There is, up to scaling, a unique bi-invariant volume form on $\SO_0(n,1)$. A natural way to normalize it is to decide that we get $1$ when we evaluate it on a direct orthonormal basis of $(\so(n,1),\kappa)$ (this requires first a choice of an orientation). We denote this volume form by $\vol^\kappa$.

Let $\Gamma$ be a finitely generated group and $j$, $\rho$ two representations of $\Gamma$ into $\SO_0(n,1)$ such that $j$ is discrete and faithful and strictly dominates $\rho$. Then the volume form $\vol^\kappa$ on $\SO_0(n,1)$ induces a volume form on its quotient by the action of $\Gamma$.

\begin{defi}
The volume of Gu\'eritaud--Kassel's quotient
\[j\times\rho(\Gamma) \backslash \SO_0(n,1)\]
is defined as
\[\Vol \left( j\times\rho(\Gamma) \backslash \SO_0(n,1) \right) = \int_{j\times\rho(\Gamma) \backslash \SO_0(n,1)} \vol^\kappa~.\]
\end{defi}

Let us reformulate our main theorem in a more precise way. We denote by $\vol^{\H^n}$ the volume form on $\H^n$ associated to the metric of constant curvature $-1$.

\begin{theo} \label{t:MainThmBis}
Let $\Gamma$ be a geometrically finite subgroup of $\SO_0(n,1)$, $j: \Gamma \to \SO_0(n,1)$ the inclusion and $\rho: \Gamma\to \SO_0(n,1)$ a representation which is strictly dominated by $j$. Let $f: \H^n\to \H^n$ be a $(j,\rho)$-equivariant contracting map. Then we have:
\[\Vol\left( (j\times \rho)(\Gamma) \backslash \SO_0(n,1)\right) = \V_n \int_{j(\Gamma) \backslash \H^n} \vol^{\H^n} + (-1)^n f^*\vol^{\H^n}~, \]
where $\V_n$ is the volume of $\SO(n)$ with respect to its Killing metric.
\end{theo}

\begin{coro} \label{c:FiniteVolume}
If $(j\times \rho)(\Gamma) \backslash \SO_0(n,1)$ has finite volume, then $j(\Gamma)$ is a lattice in $\SO_0(n,1)$ and the integral
\[\int_{j(\Gamma) \backslash \H^n} f^*\vol^{\H^n}\]
does not depend on the choice of $f$.
\end{coro}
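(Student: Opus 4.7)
My plan is to extract both conclusions directly from the formula in Theorem~\ref{t:MainThmBis}, which rewrites the volume of the quotient as
\[
\Vol\bigl((j\times\rho)(\Gamma)\backslash \SO_0(n,1)\bigr) \;=\; \V_n\Vol(j) \;+\; (-1)^n \V_n \int_{j(\Gamma)\backslash \H^n} f^*\vol^{\H^n}.
\]
The second assertion of the corollary will then be essentially formal: the left-hand side is defined intrinsically in terms of the group $(j\times\rho)(\Gamma)$, so it does not depend on the auxiliary map $f$; the first term on the right depends only on $j$; hence the remaining term $\int_{j(\Gamma)\backslash \H^n} f^*\vol^{\H^n}$ is forced to be independent of $f$ as well. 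The content of the corollary is therefore really in the first assertion, that $\Vol(j)<\infty$.

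To prove this, the key input is that $f$ is contracting: since $f$ is $\lambda$-Lipschitz for some $\lambda<1$, a pointwise comparison of $n$-forms (evaluate both sides on an orthonormal frame of $T_x\H^n$ and use the $n$-fold wedge bound $\|df_x(e_1)\wedge\cdots\wedge df_x(e_n)\|\le\lambda^n$) gives
\[
-\lambda^n\,\vol^{\H^n} \;\le\; f^*\vol^{\H^n} \;\le\; \lambda^n\,\vol^{\H^n}.
\]
Consequently, whether $n$ is even or odd, the integrand
\(\vol^{\H^n} + (-1)^n f^*\vol^{\H^n}\) is bounded below pointwise by the strictly positive form $(1-\lambda^n)\vol^{\H^n}$. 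This descends to the quotient $j(\Gamma)\backslash\H^n$ (since $f$ is $(j,\rho)$-equivariant and $j$ is faithful, the form $f^*\vol^{\H^n}$ is $j(\Gamma)$-invariant), yielding
\[
\Vol\bigl((j\times\rho)(\Gamma)\backslash \SO_0(n,1)\bigr) \;\ge\; \V_n(1-\lambda^n)\,\Vol(j).
\]
From the finite-volume hypothesis on the left we immediately deduce $\Vol(j)<\infty$, i.e.\ $j(\Gamma)$ is a lattice in $\SO_0(n,1)$. Plugging this back into the formula of Theorem~\ref{t:MainThmBis} gives the $f$-independence as above.

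I do not expect a genuine obstacle here, since all the real work is absorbed into Theorem~\ref{t:MainThmBis}. The only delicate point is the pointwise inequality $|f^*\vol^{\H^n}|\le\lambda^n\vol^{\H^n}$: one must be slightly careful because $f$ is only assumed piecewise smooth, so the bound should be verified on the smooth locus and extended by the fact that the singular locus has measure zero. Everything else is bookkeeping.
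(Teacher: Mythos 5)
Your proposal is correct and follows essentially the same route as the paper: the pointwise bound $|f^*\vol^{\H^n}|\le\lambda^n\vol^{\H^n}$ yields $\Vol\bigl((j\times\rho)(\Gamma)\backslash\SO_0(n,1)\bigr)\ge(1-\lambda^n)\V_n\Vol(j)$, which forces $j(\Gamma)$ to be a lattice, and the $f$-independence then falls out because the total volume is intrinsic while the other term depends only on $j$. Your remark about verifying the pointwise bound only on the smooth locus of the piecewise smooth map $f$ is a sensible precision that the paper leaves implicit.
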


In that case, one can take $\int_{j(\Gamma) \backslash \H^n} f^*\vol^{\H^n}$ as a definition of the ``volume'' of the representation $\rho$. This coincides with the classical definition when $j(\Gamma)$ is cocompact, and with more subtle definitions introduced in \cite{BBI13} and \cite{Francaviglia04} in general (see section \ref{ss:VolumeRepresentation}). Theorem \ref{t:VolumeGKQuotients} thus follows from theorem \ref{t:MainThmBis}.

\begin{proof}[Proof of corollary \ref{c:FiniteVolume}]
Let $\lambda < 1$ be such that $f$ is $\lambda$-Lipschitz. Then we have
\[| f^*\vol^{\H^n} | \leq \lambda^n \vol^{\H^n}\]
and thus, by Theorem \ref{t:VolumeGKQuotients},
\[\Vol\left( (j\times \rho)(\Gamma) \backslash \SO_0(n,1)\right) \geq (1-\lambda^n) \V_n \int_{j(\Gamma) \backslash \H^n} \vol^{\H^n}~. \]
Therefore, if $(j\times \rho)(\Gamma) \backslash \SO_0(n,1)$ has finite volume, then $j(\Gamma) \backslash \H^n$ has finite volume and $j(\Gamma)$ is thus a lattice. We can then re-write the volume of $(j\times \rho)(\Gamma) \backslash \SO_0(n,1)$ as
\[\int_{j(\Gamma)\backslash \H^n} \vol^{\H^n} + \int_{j(\Gamma) \backslash \H^n} f^*\vol^{\H^n}~.\]
Since this volume does not depend on the contracting map $f$, the second term obviously does not depend on $f$ either.
\end{proof}

\subsection{A word on conventions} \label{ss:Conventions}

All these results depend on the normalization we chose for the Killing metric. The definition we chose is rather specific to $\SO(n,1)$ which has a prefered linear representation (for a general Lie group, one usually uses the adjoint representation). However, this choice is particularly suited to hyperbolic geometry. Indeed, the orthogonal of $\so(n)$ in $\so(n,1)$ naturally identifies to the tangent space to $\H^n$ at some base point $x_0$. With our convention, the restriction of the Killing metric to $\so(n)^\perp$ precisely gives the Riemannian metric of constant curvature $-1$ on $\H^n$.

One also needs to choose a bi-invariant volume form on $\SO(n)$ in order to define $\V_n$. We do this in the same way as we did for $\SO(n,1)$. First define the Killing form on the Lie algebra $\so(n)$ by $(A,B) \mapsto \frac{1}{2}\Tr(AB)$, where $A$ and $B$ are anti-symmetric $n\times n$ matrices (note that this coincides with the restriction to $\so(n)$ of the Killing form on $\so(n,1)$); then normalize the volume form $\vol^{\SO(n)}$ on $\SO(n)$ so that the volume of an oriented orthonormal basis of $\so(n)$ is $1$. With this convention, the volumes $\V_n$ satisfy the nice recurrence relation:
\[\V_{n+1} = V_n \times \Vol(\S^n)~,\]
where $\S^n$ denotes the $n$-sphere. One obtains the following (not very enlightening) expression for $\V_n$:
\[\V_n = \prod_{k=1}^{n-1} \frac{\pi^{n/2}}{\Gamma(n/2)}~,\]
where $\Gamma$ is the Gamma function.

Finally, our results seem to depend on the orientation of $\SO_0(n,1)$ and $\H^n$. The most natural convention to take is to ask for the volume of a manifold to be positive. Note however that, once we fixed an orientation on $\H^n$ so that $\Vol(j)$ is positive, $\Vol(\rho)$ could very well be negative. Interestingly, if $\rho'$ is the conjugate of the representation $\rho$ by some orientation reversing isometry, then $\Vol(\rho') = -\Vol(\rho)$. In that case, both $\rho$ and $\rho'$ are strictly dominated by $j$, but the quotients $j\times\rho(\Gamma) \backslash \SO_0(n,1)$ and $j\times\rho'(\Gamma) \backslash \SO_0(n,1)$ do not have the same volume (unless $\Vol(\rho) = 0$).

\section{Volume of complete anti-de Sitter $3$-manifolds} \label{s:AdS}

The initial motivation for this work is the case when $n=2$. Indeed, the Killing metric on the Lie group $\SO_0(2,1) = \PSL(2,\R)$ is a Lorentz metric of constant negative sectional curvature, and Gu\'eritaud--Kassel's quotients in that particular case are thus \emph{anti-de Sitter} $3$-manifolds.\\

Conversely, as consequence of a theorem of Klingler \cite{Klingler96}, every closed anti-de Sitter $3$-manifold is a quotient of the universal cover of $\SO_0(2,1)$ by a discrete subgroup of transformations preserving the Killing metric. Kulkarni and Raymond proved that these manifolds are actually quotients of a finite cover of $\SO_0(2,1)$ \cite{KulkarniRaymond85}. Therefore, every closed anti-de Sitter $3$-manifold is, up to a finite cover, one of Gu\'eritaud--Kassel's quotients.

In a recent survey by specialists of this domain \cite{QuestionsAdS}, it is asked whether the volume of these manifolds is rigid (Question 2.3). Our theorem provides an answer. Let us reformulate it in this particular case. Recall that a torsion-free cocompact lattice of $\SO_0(2,1)$ is isomorphic to the fundamental group of some closed orientable surface $S$ of genus $\geq 2$. In that case, the volume of a representation $\rho: \pi_1(S) \to \SO_0(2,1)$ is $2\pi$ times the \emph{Euler class} of the representation, denoted $\euler(\rho)$.

\begin{theo}[Theorem \ref{t:VolumeGKQuotients} in the AdS case] \label{t:VolumeClosedAdS}
Let $S$ be a closed surface of genus at least $2$ an $j$ and $\rho$ two representations of $\pi_1(S)$ into $\SO_0(2,1)$ such that $j$ is discrete and faithful and such that there exists a $(j,\rho)$-equivariant contracting map from $\H^2$ to $\H^2$. Then
\[\Vol\left( j\times \rho(\pi_1(S)) \backslash \SO_0(2,1) \right) = 4 \pi^2 (\euler(j) + \euler(\rho))~.\]
In particular, this volume is an integral multiple of $4\pi^2$ and is therefore constant under continuous deformations of $j$ and $\rho$.
\end{theo}

This theorem was initially proven in the author's thesis (\cite{TholozanThese}, ch.4). There, more details are given about the various possible values for the volume of closed anti-de Sitter $3$-manifolds. In particular, the volume is expressed in terms of the topology of the quotient (which is a Seifert bundle over a hyperbolic orbifold) and an invariant that we called the \emph{length of the fiber}. This expression follows from Theorem \ref{t:VolumeClosedAdS}, but it requires a discussion about how to lift an action on $\SO_0(2,1)$ to a finite cover of $\SO_0(2,1)$. For simplicity, we chose here to restrict to the setting of surface group actions on $\SO_0(2,1)$. Though it only describes closed $\AdS$ $3$-manifolds \emph{up to a finite cover}, this is enough to obtain the volume rigidity of all closed anti-de Sitter manifolds.

Note that the expression of the volume given in Theorem \ref{t:VolumeClosedAdS} differs from the author's thesis by a factor $8$. This is due to the fact that, here, we normalized the Killing metric on $\SO_0(2,1)$ so that it induces a metric of curvature $-1$ on $\H^2$ while, in \cite{TholozanThese}, the Killing metric on $\SO_0(2,1)$ is normalized so that it has itself sectional curvature $-1$. \\

We now turn to non-compact quotients of $\AdS^3$ and prove that their volume is not rigid (Theorem \ref{t:NonRigidity}).

\begin{proof}[Proof of theorem \ref{t:NonRigidity}]
Let $\Gamma$ be a non-cocompact lattice in $\SO_0(2,1)$ and $j:\Gamma \to \SO_0(2,1)$ the inclusion. Then, up to taking a torsion-free finite index subgroup, $j(\Gamma) \backslash \H^2$ is a hyperbolic surface with at least one cusp. It is thus conformal to a $S \backslash \{p_1, \cdots, p_k\}$, where $S$ is a closed Riemann surface and $p_1, \cdots, p_k$ a finite set of points. Let us denote by $g_1$ the conformal hyperbolic metric on $S \backslash \{p_1, \cdots, p_k\}$ with some cusp at each $p_i$.

For every $0 < t < 1$, a famous theorem of Troyanov  \cite{Troyanov91} garanties the existence of a (unique) conformal metric $g_t$ of constant curvature $-1$ on $S$, with conical singularities of angle
\[\alpha(t) = 2 \pi (1-t)\left(1 + \frac{2g-2}{k}\right)\]
at each $p_i$ (here $g$ denotes the genus of $S$).

For each $0<t<1$, the metric $g_t$ induces a holonomy representation \[\rho_t : \Gamma = \pi_1(S \backslash \{p_1, \ldots, p_k\}) \to \SO_0(2,1)\]
and a developing map $f_t: \H^2 \simeq \tilde{S} \to \H^2$ which is $(j,\rho_t)$-equivariant. Moreover, since $g_t$ is conformal to $g_1$, the developing map $f_t$ is holomorphic. By Schwarz's lemma, it is either isometric or locally contracting at every point.

The local form of $g_t$ and $g_1$ at a cusp shows that $g_t/g_1$ goes to $0$ at each $p_i$. Therefore $f_t$ is not an isometry and, by compacity of $S$, it is actually a global contraction. Hence $j \times \rho_t(\Gamma)$ satisfies Gu\'eritaud--Kassel's criterion and we have
\[\Vol \left( j\times\rho_t(\Gamma) \backslash \SO_0(2,1) \right) = 2\pi \left( \Vol(j) + \Vol(\rho_t)\right)~.\]
Since the volume of $\rho_t$ is nothing but the volume of $(S,g_t)$, the Gauss--Bonnet formula gives
\[\Vol(\rho_t) = 2\pi t (2g-2+k) = t \Vol(\Gamma \backslash \H^2)~,\]
and therefore
\[\Vol \left( j\times\rho_t(\Gamma) \backslash \SO_0(2,1) \right) = 2\pi (1+t) \Vol(\Gamma \backslash \H^2)~.\]
When $t$ goes to $0$, $\rho_t$ converges (up to conjugation) to an elliptic representation $\rho_0$ (i.e. a representation fixing a point in $\H^2$). Finally, one can define $\rho_{-t}$ as the representation $\rho_t$ conjugated by an orientation reversing isometry. This way, we have
\[\Vol(\rho_t) = t \Vol(\Gamma \backslash \H^2)\]
for all $-1 < t < 1$.
Each $j\times \rho_t(\Gamma)$ satisfies Gu\'eritaud--Kassel's criterion and the volume of $j\times\rho_t(\Gamma) \backslash \SO_0(2,1)$ varies linearly between $0$ and $2 \pi \Vol(\Gamma \backslash \H^2)$.
\end{proof}

\section{Rigidity of the volume in higher dimension} \label{s:Rigidity}

The purpose of this section is to show that the non-rigidity phenomenon that we just exhibited cannot happen in higher dimension. According to Theorem \ref{t:VolumeGKQuotients}, it is enough to prove the following:

\begin{lem} \label{l:RigidityVolRep}
Let $\Gamma$ be a (not necessarily cocompact) lattice in $\SO_0(n,1)$, $n\geq 3$. Denote by $j$ the inclusion of $\Gamma$ into $\SO_0(n,1)$ and let $\rho_t$ be a continuous family of representations of $\Gamma$ into $\SO_0(n,1)$ that are strictly dominated by $j$. Then
\[ t \mapsto \Vol(\rho_t)\]
is constant.
\end{lem}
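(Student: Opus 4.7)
The strategy is to combine continuity of the volume along the family $(\rho_t)$ with a rigidity theorem for volumes of representations of hyperbolic lattices, which is specific to dimension at least $3$.

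For continuity, I first observe that by strict domination each $\rho_t$ admits a $(j,\rho_t)$-equivariant Lipschitz map $f_t : \H^n \to \H^n$ with Lipschitz constant $\lambda_t < 1$. The family $(f_t)$ can be chosen to depend continuously on $t$ (for example by applying an equivariant barycentre construction to a continuous family of input maps), and $\lambda_t$ can be taken uniformly below $1$ on any compact subinterval of the parameter. In the cocompact case the continuity of $\int_{j(\Gamma)\backslash\H^n} f_t^*\vol^{\H^n}$ in $t$ is then immediate from dominated convergence. In the non-cocompact case one must additionally control contributions from the cusps of $j(\Gamma)\backslash\H^n$; the uniform contraction supplied by $\lambda_t$, together with the framework for $\Vol(\rho)$ laid out in Section \ref{ss:VolumeRepresentation}, provides the necessary decay.

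Once continuity is established, I would invoke the volume-rigidity theorem for representations of a lattice in $\SO_0(n,1)$ into $\SO_0(n,1)$: by Besson--Courtois--Gallot \cite{BCG07} in the cocompact case and Kim--Kim \cite{KimKim13} in general, for $n \geq 3$ the function $\rho \mapsto \Vol(\rho)$ is locally constant on $\Hom(\Gamma,\SO_0(n,1))$. Combined with continuity along $(\rho_t)$, this gives at once that $t \mapsto \Vol(\rho_t)$ is locally constant, hence constant on the entire parameter interval.

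The main obstacle I anticipate is the non-cocompact case of the continuity step: defining $\Vol(\rho_t)$ precisely at each $t$ and checking continuity near the cusps is delicate, and requires that the chosen equivariant maps $(f_t)$ be compatible with the definition of $\Vol(\rho)$ adopted in Section \ref{ss:VolumeRepresentation}. Once continuity is secured, the rigidity step is essentially a direct citation. The hypothesis $n \geq 3$ is essential here: for $n=2$ the volume of $\rho$ is $2\pi$ times the Euler number, which is locally constant when $\Gamma$ is cocompact but varies continuously with $\rho$ when $\Gamma$ has cusps, precisely the mechanism exploited in Theorem \ref{t:NonRigidity}.
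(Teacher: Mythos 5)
There is a genuine gap in your rigidity step, and it sits exactly at the case the lemma is designed to handle. You cite Besson--Courtois--Gallot and Kim--Kim as showing that $\rho\mapsto\Vol(\rho)$ is locally constant on the whole representation space for every $n\geq 3$. That is not what those results give. Besson--Courtois--Gallot treat cocompact lattices; Kim--Kim extend the Schl\"afli-formula argument to non-cocompact lattices for $n\geq 4$, but in dimension $3$ the ideal-triangulation version of the Schl\"afli formula breaks down (the $(n-2)$-faces are edges, whose lengths become infinite at an ideal vertex), and Kim--Kim only recover rigidity for representations sending parabolic elements to parabolic elements. A representation strictly dominated by $j$ does the opposite: it is cusp deteriorating, i.e.\ it sends parabolics to elliptics, so it lies precisely outside the scope of their $3$-dimensional statement. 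Worse, local constancy on all of the representation space is simply false when $n=3$ and $\Gamma$ is non-cocompact --- the lemma fails if the domination hypothesis is dropped --- so no citation of a global local-constancy theorem can close the argument. The missing idea is to use the domination hypothesis itself: since each cusp group has elliptic image, it fixes a point of $\H^n$, and one can build a $(j,\rho_t)$-equivariant simplicial map sending the ideal vertex at the end of each cusp to that fixed point. Every image simplex is then a \emph{compact} hyperbolic tetrahedron, the classical Schl\"afli formula applies, and the Besson--Courtois--Gallot cancellation of dihedral angles gives $\frac{d}{dt}\Vol(\rho_t)=0$ along $\CC^1$ paths in $\Dom(\Gamma,\SO_0(n,1))$; density of smooth points in this open subset of the representation variety then yields local constancy there.

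A secondary structural remark: if local constancy of $\Vol$ on an open neighbourhood of the path were available, your separate continuity argument for $t\mapsto\Vol(\rho_t)$ would be superfluous, since a locally constant function composed with a continuous path is constant on a connected interval. Conversely, continuity alone buys nothing without a local-constancy (or quantization) statement valid on $\Dom(\Gamma,\SO_0(n,1))$, which is exactly what must be proved in the problematic case $n=3$. Your closing observation about $n=2$ is correct and consistent with Theorem \ref{t:NonRigidity}.
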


Actually, the assumption that $\rho_t$ is strictly dominated by $j$ is only needed when $n=3$ and $\Gamma$ is not cocompact. When $\Gamma$ is cocompact, the result is due to Besson--Courtois--Gallot \cite{BCG07}. It was extended by Kim and Kim for representations of non-cocompact lattices of $\SO_0(n,1)$ when $n\geq 4$ \cite{KimKim13}.

The most problematic case is when $n=3$ and $\Gamma$ is non-compact. Here the lemma is not true anymore if we remove the hypothesis that $j$ dominates $\rho_t$. In that case, lemma \ref{l:RigidityVolRep} is (and the proof will be) very similar to \cite[Theorem 1.3]{KimKim13}. Our first step will be to give a precise definition of the volume of a representation.

\subsection{Volume of representations of a lattice} \label{ss:VolumeRepresentation}

Let $\Gamma$ be a lattice in $\SO_0(n,1)$, $n\geq 3$. Denote by $j$ the inclusion of $\Gamma$ into $\SO_0(n,1)$. Following \cite{GueritaudKassel}, we call a representation $\rho:\Gamma \to \SO_0(n,1)$ \emph{cusp preserving} (resp. \emph{cusp deteriorating}) if for every parabolic element $\gamma \in \Gamma$, $\rho(\gamma)$ is parabolic or elliptic (resp. elliptic). Note that, ultimately, the only representations we are interested in here are cusp deteriorating, according to the following proposition:

\begin{prop}[\cite{GueritaudKassel}, Lemma 2.6]
If $\rho$ is strictly dominated by $j$, then $\rho$ is cusp deteriorating.
\end{prop}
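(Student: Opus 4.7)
The plan is to compare the iteration-displacement growth of $j(\gamma)$ and $\rho(\gamma)$ through the $(j,\rho)$-equivariant contracting map $f : \H^n \to \H^n$. Fix a parabolic element $\gamma \in \Gamma$, a base point $x \in \H^n$, and a Lipschitz constant $\lambda < 1$ for $f$. Equivariance together with the $\lambda$-Lipschitz property gives, for every integer $N$,
\[
d\bigl(f(x),\rho(\gamma)^N f(x)\bigr) \;=\; d\bigl(f(x),\, f(j(\gamma)^N x)\bigr) \;\leq\; \lambda\, d\bigl(x,\, j(\gamma)^N x\bigr).
\]

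The rest is a matter of matching growth rates. The standard displacement asymptotics in $\H^n$ read: for any $g \in \Isom(\H^n)$ and any $y \in \H^n$, as $|N|\to\infty$,
\[
d(y, g^N y) \;=\;
\begin{cases}
|N|\,\ell(g) + O(1), & g \text{ hyperbolic of translation length } \ell(g)>0, \\
2\log|N| + O(1), & g \text{ parabolic}, \\
O(1), & g \text{ elliptic.}
\end{cases}
\]
The parabolic line comes from a direct computation on a horosphere adapted to the fixed point: after isolating the (necessarily non-vanishing) translational component of the induced Euclidean isometry on the horosphere, the upper half-space formula $\cosh d = 1 + |p^N y - y|^2/(2tt')$ yields precisely $2\log|N|+O(1)$, with leading coefficient $2$ independent of the rotational part. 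Since $j(\gamma)$ is parabolic, the right-hand side of the first displayed inequality is $2\lambda\log|N|+O(1)$.

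Now I would simply read off the three cases for $\rho(\gamma)$. If $\rho(\gamma)$ were hyperbolic, the left-hand side of the first inequality would grow linearly in $|N|$, contradicting the logarithmic upper bound. If $\rho(\gamma)$ were parabolic, the left-hand side would be $2\log|N|+O(1)$, and dividing by $\log|N|$ and letting $|N|\to\infty$ would yield $2\leq 2\lambda$, contradicting $\lambda<1$. The only surviving possibility is that $\rho(\gamma)$ is elliptic, which is precisely the definition of $\rho$ being cusp deteriorating.

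I do not foresee any genuine obstacle: the whole argument is a single inequality followed by the three-case asymptotic classification, and the only subtle point is the \emph{uniform} coefficient $2$ in front of $\log|N|$ for \emph{every} parabolic isometry of $\H^n$, independent of its rotational component. It is exactly this uniformity, together with the strict inequality $\lambda<1$, that excludes the parabolic case and forces $\rho(\gamma)$ to be elliptic.
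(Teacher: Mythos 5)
Your argument is correct: the trichotomy of displacement growth (bounded for elliptic, $2\log|N|+O(1)$ with universal coefficient $2$ for parabolic, linear for hyperbolic) combined with the inequality $d(f(x),\rho(\gamma)^N f(x))\leq \lambda\, d(x,j(\gamma)^N x)$ and $\lambda<1$ does force $\rho(\gamma)$ to be elliptic, and the key subtlety you flag — that the coefficient $2$ is independent of the rotational part of the parabolic — is exactly the point that makes the argument work. The paper itself gives no proof here (it only cites Lemma 2.6 of Gu\'eritaud--Kassel), and your growth-rate comparison is essentially the argument used in that reference, so nothing further is needed.
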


If $\rho$ is strictly dominated by $j$, theorem \ref{t:VolumeGKQuotients} is true if we define the volume of $\rho$ as
\[\int_{j(\Gamma) \backslash \H^n} f^*\vol^{\H^n}~,\]
where $f$ is any $(j,\rho)$-equivariant contracting map from $\H^n$ to $\H^n$. According to corollary \ref{c:FiniteVolume}, this value does not depend on the choice of a contracting map. More generally, let us prove the following:

\begin{prop} \label{p:VolumeLipschitz}
If $f_0$ and $f_1$ are two $(j,\rho)$-equivariant maps that are $C$-Lipschitz, then
\[\int_{j(\Gamma) \backslash \H^n} f_0^*\vol^{\H^n} = \int_{j(\Gamma) \backslash \H^n} f_1^*\vol^{\H^n}~.\]
\end{prop}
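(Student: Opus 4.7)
The plan is to construct a $(j,\rho)$-equivariant Lipschitz homotopy between $f_0$ and $f_1$ by geodesic interpolation, extract a primitive of $f_1^*\vol^{\H^n} - f_0^*\vol^{\H^n}$ by fiber integration, and then kill its integral over $M := j(\Gamma) \backslash \H^n$ by Stokes' theorem on an exhaustion by complements of cusps. Concretely, let $F(t,x)$ be the point at parameter $t$ on the unique constant-speed geodesic in $\H^n$ from $f_0(x)$ to $f_1(x)$. Since $\rho(\gamma)$ is an isometry and geodesics are unique, $F$ satisfies $F(t, j(\gamma)x) = \rho(\gamma) F(t,x)$. The $\mathrm{CAT}(-1)$ geometry of $\H^n$ makes $t \mapsto d(F_t(x), F_t(y))$ convex, hence bounded by $C \cdot d(x,y)$, so each time-slice $f_t := F(t, \cdot)$ is $C$-Lipschitz.

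Writing $F^*\vol^{\H^n} = dt \wedge \alpha_t + f_t^*\vol^{\H^n}$ on $[0,1] \times \H^n$ and using that $\vol^{\H^n}$ is a closed top form, one obtains $\partial_t f_t^*\vol^{\H^n} = d\alpha_t$ and hence
\[f_1^*\vol^{\H^n} - f_0^*\vol^{\H^n} = d\eta, \qquad \eta := \int_0^1 \alpha_t\,dt.\]
Equivariance of $F$ makes $\eta$ a $j(\Gamma)$-invariant $(n-1)$-form on $\H^n$, and the elementary estimate $|\alpha_t| \leq |\partial_t F| \cdot \|Df_t\|^{n-1} \leq C^{n-1} D(x)$, with $D(x) := d(f_0(x), f_1(x))$, gives $|\eta| \leq C^{n-1} D$ pointwise. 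The function $D$ is itself $2C$-Lipschitz (triangle inequality) and $j(\Gamma)$-invariant, hence descends to a $2C$-Lipschitz function on $M$, which therefore grows at most linearly, $D = O(R)$, in the hyperbolic depth $R$ inside each cusp. Truncating each cusp at depth $R$ yields a compact exhaustion $K_R \subset M$ whose boundary is a disjoint union of horospherical quotients of total hyperbolic volume $O(e^{-(n-1)R})$, so that
\[\left|\int_M f_1^*\vol^{\H^n} - f_0^*\vol^{\H^n}\right| = \lim_{R \to \infty} \left|\int_{\partial K_R} \eta \right| \leq \lim_{R \to \infty} O\!\left(R\,e^{-(n-1)R}\right) = 0.\]

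The main obstacle is the cusp analysis: one must arrange that the pointwise bound on $\eta$, combined with the at-most-linear growth of $D$, is dominated by the exponential decay of horospherical volumes. The Lipschitz/$\mathrm{CAT}(-1)$ interpolation is crucial, as it supplies simultaneously the bound $|\eta| \leq C^{n-1} D$ and the Lipschitz descent of $D$ to $M$; without both, there would be no obvious reason for the boundary integrals to vanish. A minor technicality is that $f_0, f_1$ are only Lipschitz, so pullbacks are defined a.e.\ by Rademacher and Stokes is used in its Lipschitz version (alternatively one mollifies); this is harmless.
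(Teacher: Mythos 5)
Your proposal is correct and follows essentially the same route as the paper: geodesic interpolation $F(t,x)$ between $f_0$ and $f_1$ (each slice $C$-Lipschitz by convexity of the hyperbolic distance), Stokes on a cusp exhaustion $K_p$, and the boundary term bounded by $C^{n-1}\cdot\area(\partial K_p)\cdot\max_{\partial \tilde K_p} d(f_0,f_1)$, which is $O(p\,e^{-(n-1)p})$ since $d(f_0,f_1)$ is $2C$-Lipschitz and horospherical areas decay exponentially. The only cosmetic difference is that you extract the primitive $\eta$ by fiber integration before applying Stokes, whereas the paper applies Stokes directly to $F^*\vol^{\H^n}$ on $K_p\times[0,1]$; these are the same computation.
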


\begin{rmk}
The existence of a $(j,\rho)$-equivariant Lipschitz map is equivalent to $\rho$ being cusp-preserving (\cite[Lemma 4.9]{GueritaudKassel}).
\end{rmk}

\begin{proof}
When $\Gamma$ is cocompact, the fact that $\int_{j(\Gamma) \backslash \H^n} f^*\vol^{\H^n}$ does not depend on $f$ is a classical consequence of Stokes's formula. In the non-compact case we simply apply Stokes formula to the complement of some cusped regions. A boundary term appears, and we must prove that this term goes to $0$ when these cusped regions are chosen smaller and smaller. \\

Let $f_t$ be the map from $\H^n$ to $\H^n$ sending a point $x$ to the barycenter
\[\Bar \left\{ (f_0(x), 1-t);(f_1(x),t) \right\}\]
(i.e. the point on the segment $[f_0(x), f_1(x)]$ satisfying \[d(f_t(x),f_0(x)) = t\, d(f_1(x), f_0(x))~).\]
By convexity of the hyperbolic distance, $f_t$ is $C$-Lipschitz for all $t$. By equivariance of $f_0$ and $f_1$, each $f_t$ is $(j,\rho)$-equivariant. Putting all these maps together, we obtain a $(j,\rho)$-equivariant map $F: \H^n\times [0,1] \to \H^n$.

Denote $M = j(\Gamma) \backslash \H^n$. A \emph{cusp} in $M$ is an open connect domain whose lift to $\H^n$ is a disjoint union of horoballs. Let $K_0$ be a compact in $M$ whose complement is a disjoint union of cusps. Let $K_p$ be the set of points of $M$ at distance at most $p$ from $K_0$. Then the complement of $K_p$ in $M$ is still a disjoint union of cusps.

Since $\vol^{\H^n}$ is closed, by Stokes's formula, we have
\[\int_{\partial (K_p\times [0,1])} F^*\vol^{\H^n} = 0~,\]
hence
\begin{equation} \label{eq:Stokes}
\int_{K_p} f_0^*\vol^{\H^n} - \int_{K_p} f_1^*\vol^{\H^n} = \int_{[0,1] \times \partial K_p} F^*\vol^{\H^n}~.
\end{equation}
We need to prove that the right hand term goes to $0$ when $p$ goes to infinity.

Since each $f_t$ is $C$-Lipschitz and since $F(x, \cdot)$ maps $[0,1]$ to the segment $[f_0(x), f_1(x)]$, it follows that
\[ \left| \int_{[0,1] \times \partial K_p} F^*\vol^{\H^n} \right | \leq C^{n-1}\times \area(\partial K_p) \times \max_{x\in \partial \tilde{K}_p} d(f_0(x), f_1(x))~.\]
By definition of $K_p$, we have
\[\max_{x\in \partial \tilde{K}_p} d(f_0(x), f_1(x)) \leq \max_{x\in \partial \tilde{K}_0} d(f_0(x), f_1(x)) + 2 Cp~.\]
On the other side, it is a well-known fact in hyperbolic geometry that
\[\area(\partial K_p) = e^{-(n-1)p} \area(\partial K_0)~.\]
Putting these two facts together we obtain
\begin{eqnarray*}
\left| \int_{[0,1] \times \partial K_n} F^*\vol^{\H^n} \right| & \leq & (\Cste\, p + \Cste') e^{-(n-1)p}\\
\ & \tend{n\to +\infty} & 0~.
\end{eqnarray*}
By equation \eqref{eq:Stokes}, we finally obtain
\begin{eqnarray*}
\int_M f_0^*\vol^{\H^n} & = & \lim_{n\to +\infty} \int_{K_n} f_0^*\vol^{\H^n} \\
\ & = & \lim_{n\to +\infty} \int_{K_n} f_1^*\vol^{\H^n} \\
\ & = & \int_M f_0^*\vol^{\H^n}~.
\end{eqnarray*}

\end{proof}

Proposition \ref{p:VolumeLipschitz} allows us to define the volume of a cusp preserving representation $\rho$ as the integral of the pull-back volume form by any $(j,\rho)$-equivariant Lipschitz map. We will show that this is a particular case of a more general definition.

Let ${\widehat{\H}^n}_\Gamma$ denote the union of $\H^n$ with all the fixed points in $\partial_\infty \H^n$ of parabolic elements in $\Gamma$. We provide ${\widehat{\H}^n}_\Gamma$ with the topology extending the topology of $\H^n$ an such that for any point $x\in {\widehat{\H}^n}_\Gamma \cap \partial_\infty \H^n$, a neighbourhood basis of $x$ is given by the family of horoballs tangent to $x$. With this topology, $j(\Gamma)$ acts properly discontinuously and cocompactly on ${\widehat{\H}^n}_\Gamma$, and the quotient is the compactification of $j(\Gamma) \backslash \H^n$ where one point as been ``added'' at the end of each cusp. 

\begin{defi}
A piecewise smooth $(j,\rho)$-equivariant map $f:\H^n \to \H^n$ is \emph{nicely ending} if it extends continuously to a map from ${\widehat{\H}^n}_\Gamma$ to $\overline{\H}^n$.
\end{defi}

\begin{propdef}[Kim--Kim, \cite{KimKim13}] \label{d:VolumeRepresentation}
Let $\rho: \Gamma \to \SO_0(n,1)$ be a representation. Let $f:\H^n \to \H^n$ be a $(j,\rho)$-equivariant nicely ending map. Then
\[\int_{j(\Gamma)\backslash \H^n} f^*\vol^{\H^n}\]
is finite and does not depend on the choice of $f$. This number is called the \emph{volume} of the representation $\rho$.
\end{propdef}

A particular case of a nicely ending map $f$ is a so called \emph{pseudo-developing} or \emph{properly ending} map (\cite{Francaviglia04}, \cite{FrancavigliaKlaff06}). Therefore, this definition of $\Vol(\rho)$ coincides with the definition of the volume of $\rho$ by Dunfield \cite{Dunfield99} and Francaviglia \cite{Francaviglia04}. Kim and Kim show that one can always construct a nicely ending \emph{simplicial} map $f$ and decude that this definition of $\Vol(\rho)$ also coincides with the definition by Bucher--Burger--Iozzi using relative cohomology \cite{BBI13}. Here we will prove that it also coincides with our definition:

\begin{prop}
Let $\rho$ be a cusp preserving representation of $\Gamma$ into $\SO_0(n,1)$. Then for any $(j,\rho)$-equivariant Lipschitz map $f$, 
\[\int_{j(\Gamma)\backslash \H^n} f^*\vol^{\H^n}\]
is equal to the volume of $\rho$ (in the sense of Kim and Kim).
\end{prop}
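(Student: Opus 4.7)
My strategy is to sandwich the two notions of volume using a common reference map. Explicitly, I would produce a $(j,\rho)$-equivariant map $h : \H^n \to \H^n$ that is \emph{both} Lipschitz and nicely ending. Once such an $h$ is available, Proposition \ref{p:VolumeLipschitz} applied to the pair $(f,h)$ gives
\[
\int_{j(\Gamma) \backslash \H^n} f^*\vol^{\H^n} \;=\; \int_{j(\Gamma) \backslash \H^n} h^*\vol^{\H^n}~,
\]
while Proposition/Definition \ref{d:VolumeRepresentation} applied to $h$ gives
\[
\int_{j(\Gamma) \backslash \H^n} h^*\vol^{\H^n} \;=\; \Vol(\rho)~.
\]
Combining these two identities proves the proposition.

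\textbf{Construction of $h$.} To produce $h$, I would perform an equivariant ideal straightening, mirroring the construction of Kim--Kim. Fix a $j(\Gamma)$-equivariant triangulation of ${\widehat{\H}^n}_\Gamma$ with finitely many orbits of simplices and whose $0$-simplices lie either in $\H^n$ or at parabolic fixed points of $\Gamma$ on $\partial_\infty \H^n$. To each $j(\Gamma)$-orbit of interior vertices, equivariantly assign a target point in $\H^n$; to each orbit of parabolic fixed points $p$, equivariantly assign a point of $\overline{\H}^n$ fixed by $\rho(\Gamma_p)$. Such a fixed point exists by the cusp-preserving hypothesis: since $\Gamma_p$ is abelian and each element of $\rho(\Gamma_p)$ is parabolic or elliptic, the group $\rho(\Gamma_p)$ fixes either a point of $\partial_\infty \H^n$ (if some element is parabolic) or a point of $\H^n$ (if all elements are elliptic). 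Extend $h$ to each simplex by geodesic straightening. By construction, $h$ is $(j,\rho)$-equivariant, piecewise smooth, and extends continuously from ${\widehat{\H}^n}_\Gamma$ to $\overline{\H}^n$, hence is nicely ending.

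\textbf{Main obstacle.} The heart of the argument is to verify that $h$ is Lipschitz. Away from the cusps, this follows from piecewise smoothness and $j(\Gamma)$-cocompactness of the complement of a horoball neighborhood of the cusps. Near each cusp the analysis splits into two cases. In the parabolic subcase, I would work in upper half-space coordinates with both $p$ and its image at $\infty$; the horospheric contraction rates in the hyperbolic metric $\frac{dy^2 + ds^2}{s^2}$ on source and target then match, so a direct computation shows that each geodesically straightened simplex with a vertex at $\infty$ has bounded hyperbolic derivative. In the elliptic subcase, $h$ crushes the cusp into a fixed point $q \in \H^n$; the straightened map sends horoballs of depth at least $s$ into balls around $q$ of radius $O(e^{-s})$, which matches the hyperbolic diameter $O(e^{-s})$ of a fundamental domain on the horosphere at depth $s$, keeping the Lipschitz constant bounded. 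Patching these estimates across the finitely many cusp orbits yields a global Lipschitz constant for $h$, completing the proof.
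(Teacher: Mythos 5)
Your proposal is correct and follows essentially the same route as the paper: both proofs reduce the statement, via Propositions \ref{p:VolumeLipschitz} and \ref{d:VolumeRepresentation}, to exhibiting a single $(j,\rho)$-equivariant map that is simultaneously Lipschitz and nicely ending, built by sending each cusp to a fixed point of the corresponding $\rho(\Gamma_i)$ according to the same elliptic/parabolic dichotomy. The only difference is cosmetic: the paper constructs the map cusp by cusp (collapsing to a point in the elliptic case, extending along the flows toward the ideal fixed points in the parabolic case) and then extends smoothly over the compact core, whereas you package the same construction as a Kim--Kim-style geodesic straightening of an ideal triangulation; the Lipschitz estimates near the cusps that you identify as the main obstacle are exactly the ones the paper also has to (and does only briefly) address.
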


\begin{proof}
by propostion \ref{p:VolumeLipschitz} and proposition \ref{d:VolumeRepresentation}, it is enough to find a nicely ending Lipschitz map. We start by defining this map in the (lifts of the) cusps. Since the complement is compact, it is then easy to extend it as a global Lipschitz map.\\

Let us choose a compact core in $M= j(\Gamma) \backslash \H^n$ whose lift to $\H^n$ is the complement of a disjoint union of horoballs. The action of $j(\Gamma)$ on the space of these horoballs has finitely many orbits. Let us choose $B_1,\ldots, B_k$ some representatives in each orbit, and denote by $\Gamma_i$ the subgroup of $\Gamma$ whose image by $j$ stabilizes $B_i$.

The group $j(\Gamma_i)$ contains no hyperbolic element. Since $\rho$ is cusp preserving, this is also true for $\rho(\Gamma_i)$. It is then a classical fact that one of the following holds:
\begin{itemize}
\item either $\rho(\Gamma_i)$ fixes a point $y_i$ in $\H^n$
\item or $\rho(\Gamma_i)$ fixes a point $y_i$ in $\partial_\infty \H^n$ and preserves a Buseman function $\beta_{y_i}$ associated to $y_i$
\end{itemize}
In the first case, we say that $\rho(\Gamma_i)$ is of elliptic type. In the second case, we say that $\rho(\Gamma_i)$ is of parabolic type.

If $\rho(\Gamma_i)$ is of parabolic type, then it commutes with the flow $F_{y_i}$ such that $F_{y_i}(x,t)$ is at distance $t$ from $x$ on the geodesic ray from $x$ to $y_i$. Let $x_i$ be the point in $\partial_\infty \H^n$ fixed by $j(\Gamma_i)$. One can find a $(\Gamma_i, j,\rho)$-equivariant Lipschitz map $f$ from the horosphere $\partial B_i$ to the horosphere $\{\beta_{y_i} = 0\}$, and then extend it to $B_i$ by setting
\[f(F_{x_i}(x,t)) = F_{y_i}(f(x),t)~.\]
One can check that $f$ is still Lipschitz.

If $\rho(\Gamma_i)$ is of elliptic type, one can simply set $f(B_i) = \{y_i\}$. There is now a unique way of extending $f$ equivariantly to
\[ \bigsqcup_{i= 1}^k \bigsqcup_{\gamma \in \Gamma} j(\gamma) \cdot B_i~.\]
Finally, one can extend $f$ smoothly and equivariantly to $\H^n$. Since $f$ is locally $C$-Lipschitz on $B_i$ (for some constant $C$), and since $j(\Gamma) \backslash \left( \H^n -\bigsqcup_{i=1}^n \Gamma \cdot B_i \right)$ is compact, the resulting map $f$ is globally Lipschitz.

Finally, by construction, $f$ extends naturally to a map from ${\widehat{\H}^n}_\Gamma$ to $\overline{\H}^n$ by setting
\[f(x_i) = y_i~.\]
Therefore, $f$ is a nicely ending Lipschitz map.
\end{proof}

\subsection{Proof of lemma \ref{l:RigidityVolRep}}

The main ideas of the proof are due to Besson--Courtois--Gallot \cite{BCG07} and Kim--Kim \cite{KimKim13}. Here we will only sketch the general strategy and explain how to overcome the technical difficulties arising when working with non-compact lattices.\\

 Besson--Courtois--Gallot's idea to prove the volume rigidity of representations in $\SO_0(n,1)$ is to use the Schl\"afli formula. Let us start by recalling this formula.

Let $x_0, \ldots, x_n$ be $n+1$ points in $\H^n$. We say that $(x_0,\ldots, x_n)$ are (projectively) independent if their convex hull as non-empty interior. In that case, the convex hull is an $n$-dimensional tetrahedron denoted $[x_0,\ldots, x_n]$. For any pairwise distinct $i_0, \ldots, i_k$ in $\{0,\ldots, n\}$, the convex hull of $x_{i_0}, \ldots, x_{i_k}$ forms a $k$-face of $[x_0,\ldots, x_n]$, denoted $[x_{i_0}, \ldots, x_{i_k}]$.

\begin{prop}[Schl\"afli formula] \label{p:SchlafliFormula}
Let $x_0(t), \ldots, x_n(t)$ be $\CC^1$ curves in $\H^n$ that are projectively independent at each time $t$. Then
\[\dt \Vol[x_0, \ldots, x_n] = \sum_{i < j} \area[x_0, \ldots, \hat{x}_i, \hat{x}_j, \ldots, x_n]\cdot \dt \theta(x_i,x_j)~,\]
where $\theta(x_i,x_j)$ denotes the dihedral angle at the $(n-2)$-face $[x_0, \ldots, \hat{x}_i, \hat{x}_j, \ldots, x_n]$.
\end{prop}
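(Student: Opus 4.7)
The plan is to establish the formula by reducing to the case of a single moving vertex, converting the variation of volume into a boundary integral via Stokes's theorem, and then reorganizing the resulting face contributions as a sum over the codimension-two faces $F_{jk} = [x_0, \ldots, \hat x_j, \ldots, \hat x_k, \ldots, x_n]$. By linearity of the time derivative at $t = 0$, it suffices to treat the case where only one vertex, say $x_0(t)$, moves while $x_1, \ldots, x_n$ remain fixed; the general statement then follows by summing the contributions of each vertex.

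For the reduction, set $v = \dot x_0(0)$ and extend $v$ to a smooth vector field $Y$ on (a neighbourhood of) $\sigma(0)$ which agrees with $v$ at $x_0$ and vanishes on the opposite face $F_0 = [x_1, \ldots, x_n]$; a geodesic interpolation along rays from $F_0$ to $x_0$ will do. Writing $\omega = \vol^{\H^n}$ and using the fact that $\omega$ is closed, Cartan's magic formula together with Stokes's theorem gives
$$\dt \Vol(\sigma(t))\Big|_{t=0} = \int_{\sigma} \mathcal{L}_Y \omega = \int_{\partial \sigma} \iota_Y \omega = \sum_{k=1}^n \int_{F_k} \iota_Y \omega,$$
where $F_k = [x_0, x_1, \ldots, \hat x_k, \ldots, x_n]$ and the face $F_0$ contributes nothing since $Y|_{F_0} \equiv 0$. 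Pointwise on $F_k$, the integrand equals the component of $Y$ along the outward unit normal to $F_k$ times the induced volume form on $F_k$.

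The remaining task, and the step I expect to be the main obstacle, is to show that this sum can be rewritten as $\sum_{j < k} \area(F_{jk}) \cdot \dt \theta(x_j, x_k)$. The natural route is a local analysis near each codimension-two face $F_{jk} \subset F_k$: in the orthogonal decomposition $T_p \H^n = T_p F_{jk} \oplus \R n_j \oplus \R n_k$ (with $n_j, n_k$ the outward unit normals to $F_j, F_k$ along $F_{jk}$), one rewrites the flux of $Y$ across $F_k$ near $F_{jk}$ in terms of the normal components of $Y$ with respect to both $F_j$ and $F_k$, and one shows that the relevant combination integrates to $\dt \theta(x_j, x_k)$ times the area of $F_{jk}$. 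The key geometric point is that the infinitesimal motion of $x_0$ induces an infinitesimal "rotation" of the hyperplane carrying $F_k$ about the codimension-two plane carrying $F_{jk}$, whose angular speed is precisely the variation of the dihedral angle. Once this identification is made, the contributions of $F_j$ and $F_k$ pair up for each unordered pair $\{j, k\}$ and produce the formula. This last identification — translating an infinitesimal isometric motion into an angular variation along a codimension-two face — is the technical heart of Schl\"afli's original argument and is handled via explicit coordinate computations in the classical treatments (Milnor, Vinberg); I would follow their approach rather than attempt a new derivation.
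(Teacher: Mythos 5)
First, a point of comparison: the paper does not prove this proposition at all --- it states the Schl\"afli formula as a classical fact (attributed implicitly to the literature via \cite{BCG07} and \cite{KimKim13}) and only \emph{uses} it in the sketch of Lemma \ref{l:RigidityVolRep}. So your attempt is not competing with an argument in the text; it has to stand on its own as a proof of a classical theorem.

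As it stands, it does not. The reduction to a single moving vertex and the identity $\frac{d}{dt}\Vol(\sigma(t)) = \int_{\partial\sigma}\iota_Y\omega$ are fine (with the caveat that your vector field $Y$ must be the actual variation field of the cone construction, so that its flow carries each facet through $x_0$ onto the corresponding facet of $\sigma(t)$ to first order --- ``some geodesic interpolation'' is not automatically enough). But everything after that, which you yourself identify as ``the technical heart,'' is deferred to Milnor and Vinberg rather than proved. Worse, the mechanism you sketch for that step is not the right one: the flux $\int_{F_k}\langle Y,\nu_k\rangle$ is an integral spread over the entire facet $F_k$ and does not localize near the codimension-two faces $F_{jk}$, so no ``local analysis near $F_{jk}$'' will convert it into $\area(F_{jk})\cdot\frac{d}{dt}\theta(x_j,x_k)$. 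In the divergence-theorem proofs of Schl\"afli's formula one must apply Stokes a second time on each (totally geodesic) facet and invoke a curvature identity; it is exactly at this point that the constant curvature of $\H^n$ enters and produces the factor $1/((n-1)K)$. Your outline never uses constant curvature anywhere, yet the formula is false in variable curvature --- that is the clearest sign the essential idea is missing. (Relatedly: a correct derivation yields $\frac{d}{dt}\Vol = -\frac{1}{n-1}\sum_{i<j}\area(F_{ij})\,\frac{d}{dt}\theta(x_i,x_j)$ in $\H^n$; the statement as printed in the paper drops this constant, which is harmless for the rigidity application but means your completed argument would not literally prove the displayed identity.) To make this a proof you would need to either carry out the second integration by parts on the facets together with the constant-curvature identity, or switch to Milnor's route (analyticity of the volume as a function of the Gram matrix, reduction to $\partial V/\partial\theta_{ij} = \frac{1}{n-1}\vol_{n-2}(F_{ij})$ in the spherical case, induction on dimension); simply citing those sources is acceptable for the paper's purposes, but then the Stokes setup you wrote is not doing any work.
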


Besson--Courtois--Gallot use this to prove the rigidity of the volume of a representation of a cocompact lattice. Indeed, in the cocompact case, one can fix a triangulation of $M = j(\Gamma) \backslash \H^n$ and choose equivariant maps that are simplicial (i.e. cells of the triangulation are mapped to hyperbolic tetrathedra). The dihedral angles around a given $(n-2)$-cell then add up to a multiple of $2\pi$. This implies a cancellation when computing the derivative of the volume along a $\CC^1$ family of representations.

Kim and Kim have adapted this proof to the non-cocompact case by considering a triangulation of $M$ with an ``ideal'' vertex at the end of each cusp and allowing to map these ideal vertices to the boundary of $\H^n$. This gives the rigidity in dimension $n\geq 4$, where the Schl\"afli formula naturally extends to tetrahedra with ideal vertices, but it partly fails in dimension $3$ because, in that case, the Schl\"affli formula does not make sense when some vertex is at infinity (indeed, the area of an $(n-2)$-face is the length of an edge and becomes infinite when some vertex is in the boundary). However, using a variation of the Schl\"afli formula in that case, they were able to prove the rigidity of $\Vol(\rho)$ in restriction to representations sending parabolic elements to parabolic elements.

The situation is simpler when we restrict to representations $\rho$ that are strictly dominated by $j$. Indeed, such representations are cusp deteriorating and, therefore, the image of each cusp group is of elliptic type and fixes a point in $\H^n$. One can choose a simplicial map sending the vertex at the end of a cusp to such a fixed point. This way, the image of each cell is a compact tetrahedron in $\H^n$. One can hence use the Schl\"afli formula and Besson--Courtois--Gallot's argument to prove that the volume is constant along every $\CC^1$ path in the space $\Dom(\Gamma, \SO_0(n,1))$ of representations of $\Gamma$ into $\SO_0(n,1)$ that are strictly dominated by $j$.

Now, $\Dom(\Gamma, \SO_0(n,1))$ is an open set in a real algebraic variety. Therefore, the space of smooth points is dense in $\Dom(\Gamma, \SO_0(n,1))$ and we conclude that the volume function is locally constant on $\Dom(\Gamma, \SO_0(n,1))$.

\section{Computation of the volume} \label{s:ComputationVolume}

This section is devoted to the proof of theorem \ref{t:VolumeGKQuotients}, and more precisely to theorem \ref{t:MainThmBis}. From now on, we fix an integer $n \geq 2$, a torsion-free finitely generated subgroup $\Gamma$, a discrete and faithful representation $j: \Gamma \to \SO_0(n,1)$ and another representation $\rho: \Gamma \to \SO_0(n,1)$ such that there is a map $f: \H^n \to \H^n$ which is $(j,\rho)$-equivariant and contracting.

\subsection{Bundle structure of Gu\'eritaud--Kassel's quotients} \label{ss:DescriptionGKQuotients}

The map $f$ allows to describe the quotient $(j\times \rho)(\Gamma) \backslash \SO_0(n,1)$ as a bundle over $j(\Gamma) \backslash \H^n$ whose fibers have the form $K g$, with $g$ an element of $\SO_0(n,1)$ and $K$ a maximal compact subgroup of $\SO_0(n,1)$ (i.e. a subgroup conjugated to $\SO(n)$). The way we do this construction here is a differentiable version of the construction given in \cite{GueritaudKassel}. \\

We can assume without loss of generality that $f$ is smooth. Let us introduce the map
\[\pi  : \SO_0(n,1) \to \H^n\]
such that $\pi(g)$ is the unique fixed point of $g\circ f$. This map is well defined because $f$ (and hence $g\circ f$) is a contracting map.

The following propositions follow easily from the definition.
\begin{prop} \label{p:FibersPi}
The map $\pi$ is onto and for all $x \in \H^n$, one has
\[\pi^{-1}(x) = K_x g~,\]
where $g$ is some element in $\pi^{-1}(x)$ and $K_x$ is the stabilizer of $x$.
\end{prop}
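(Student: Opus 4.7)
My approach is to unwind the definition of $\pi$ into a simple algebraic condition on $g$, and then exploit transitivity of the $\SO_0(n,1)$-action on $\H^n$ together with the identification of its point stabilizers with maximal compact subgroups.

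First I would check that $\pi$ is well-defined. For every $g \in \SO_0(n,1)$, the composition $g\circ f$ is $\lambda$-Lipschitz with $\lambda < 1$, since $g$ is an isometry and $f$ is $\lambda$-Lipschitz by hypothesis. The Banach fixed-point theorem, applied in the complete metric space $\H^n$, then produces a unique fixed point, which is exactly $\pi(g)$. The useful reformulation is
\[
\pi(g) = x \ \iff \ g(f(x)) = x,
\]
which turns the proposition into a statement about the transitivity of $\SO_0(n,1)$ on $\H^n$ and the isotropy of $f(x)$.

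Surjectivity is immediate from this reformulation: since $\SO_0(n,1)$ acts transitively on $\H^n$, there exists $g_0 \in \SO_0(n,1)$ with $g_0(f(x)) = x$, and such a $g_0$ lies in $\pi^{-1}(x)$. To identify the full fiber, fix such a $g_0$. If $g \in \pi^{-1}(x)$, then
\[
(gg_0^{-1})(x) = (gg_0^{-1})(g_0(f(x))) = g(f(x)) = x,
\]
so $gg_0^{-1} \in K_x$, i.e.\ $g \in K_x\cdot g_0$. Conversely, for any $k\in K_x$ one has $(kg_0)(f(x)) = k(x) = x$, so $kg_0 \in \pi^{-1}(x)$. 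This gives $\pi^{-1}(x) = K_x\cdot g_0$, as claimed.

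I do not expect a genuine obstacle in this proposition: the whole content is the contracting fixed-point argument plus the standard identification of $\SO_0(n,1)/K_x$ with $\H^n$. The only point worth a remark is the need to express the fiber as a \emph{left} coset $K_x\cdot g_0$, whereas the first description that comes to mind is the right coset $g_0\cdot K_{f(x)}$ (where $K_{f(x)}$ is the stabilizer of $f(x)$); these coincide thanks to the conjugation identity $g_0\, K_{f(x)}\, g_0^{-1} = K_x$ coming from $g_0(f(x))=x$.
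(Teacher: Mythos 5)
Your argument is correct and is essentially the paper's own proof: the well-definedness via the contraction fixed point, surjectivity via transitivity, and the computation $gg_0^{-1}\cdot x = g\cdot f(x) = x$ identifying the fiber with the coset $K_x g_0$ all match. The closing remark about left versus right cosets is a harmless addition.
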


\begin{prop} \label{p:EquivariancePi}
The map $\pi$ is $\Gamma$-equivariant, i.e. for all $\gamma \in \Gamma$ and all $g \in \SO_0(n,1)$,
\[\pi\left( j(\gamma) g \rho(\gamma)^{-1}\right) = j(\gamma) \cdot \pi(g)~.\]
\end{prop}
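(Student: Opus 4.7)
The plan is that this proposition is essentially a direct unpacking of definitions: by construction, $\pi(h)$ is the unique fixed point of $h \circ f$ for any $h \in \SO_0(n,1)$, so establishing equivariance reduces to showing that $j(\gamma) \cdot \pi(g)$ is fixed by $(j(\gamma) g \rho(\gamma)^{-1}) \circ f$ and then invoking the uniqueness of that fixed point.

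First I would note that $\pi(h)$ is indeed well-defined for every $h \in \SO_0(n,1)$: the map $f$ is $\lambda$-Lipschitz for some $\lambda < 1$ by assumption, and $h$ acts as an isometry of $\H^n$, so $h \circ f$ is a strict $\lambda$-contraction of the complete metric space $\H^n$. The Banach fixed point theorem then provides a unique fixed point. Applying this observation to $h = j(\gamma) g \rho(\gamma)^{-1}$ guarantees that both sides of the equivariance identity are well-defined points in $\H^n$.

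For the computation itself, set $x = \pi(g)$, so that $g f(x) = x$ by definition. The $(j,\rho)$-equivariance of $f$ reads $f \circ j(\gamma) = \rho(\gamma) \circ f$ for all $\gamma \in \Gamma$. Evaluating the composition $(j(\gamma) g \rho(\gamma)^{-1}) \circ f$ at the point $j(\gamma) x$, I get
\[
(j(\gamma) g \rho(\gamma)^{-1}) \circ f (j(\gamma) x) \;=\; j(\gamma) g \rho(\gamma)^{-1} \rho(\gamma) f(x) \;=\; j(\gamma) \cdot (g f(x)) \;=\; j(\gamma) x.
\]
Therefore $j(\gamma) \cdot \pi(g)$ is a fixed point of the contraction $(j(\gamma) g \rho(\gamma)^{-1}) \circ f$, and by uniqueness it must coincide with $\pi(j(\gamma) g \rho(\gamma)^{-1})$, which is the claimed identity.

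I do not expect any genuine obstacle here: the argument is just two lines once one has written down the defining property of $\pi$ and the equivariance of $f$. The one conceptual point worth highlighting is that the factor $\rho(\gamma)^{-1}$ appearing on the right in the formula $(a,b)\cdot g = agb^{-1}$ for the $\SO_0(n,1)\times\SO_0(n,1)$-action is exactly what cancels against the $\rho(\gamma)$ produced by moving $j(\gamma)$ past $f$ via equivariance; this is the structural reason why $\pi$ intertwines the twisted $\Gamma$-action on $\SO_0(n,1)$ with the plain $j$-action on $\H^n$.
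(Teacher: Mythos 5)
Your proof is correct and is essentially identical to the paper's: both verify via the $(j,\rho)$-equivariance of $f$ that $j(\gamma)\cdot\pi(g)$ is fixed by $\left(j(\gamma)g\rho(\gamma)^{-1}\right)\circ f$ and conclude by uniqueness of the fixed point of a contraction. Your additional remark on well-definedness via the Banach fixed point theorem matches the paper's earlier observation that $\pi$ is well defined because $g\circ f$ is contracting.
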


\begin{proof}[Proof of proposition \ref{p:FibersPi}]
By transitivity of the action of $\SO(n,1)$ on $\H^n$, there exists, for all $x \in \H^n$, some $g\in \SO(n,1)$ such that $g\cdot f(x) = x$. We thus have $\pi(g) = x$. Hence $\pi$ is onto. Let $h$ be another element in $\pi^{-1}(x)$. Then
\[hg^{-1} \cdot x = hg^{-1} g \cdot f(x) = h \cdot f(x) = x~.\]
Hence $h\in K_x g$. The same computation shows that, conversely, if $h\in K_x g$ then $\pi(h) = x$.
\end{proof}

\begin{proof}[Proof of proposition \ref{p:EquivariancePi}]
Fix some $\gamma \in \Gamma$ and some $g\in \SO(n,1)$. One easily verifies that $j(\gamma) g \rho(\gamma)^{-1} \circ f$ fixes $j(\gamma) \cdot \pi(g)$. Indeed,
\begin{displaymath}
\begin{array}{rcll}
j(\gamma) g \rho(\gamma)^{-1} \cdot f\left( j(\gamma) \cdot \pi(g)\right) &  = & j(\gamma) g \cdot f(\pi(g)) & \textrm{(by equivariance of $f$)}\\
\ & = & j(\gamma) \cdot \pi(g) & \textrm{(by definition of $\pi$).}
\end{array}
\end{displaymath}
Therefore
\[\pi(j(\gamma) g \rho(\gamma)^{-1}) = j(\gamma) \cdot \pi(g)~.\]
\end{proof}

Finally , let us compute the first derivative of $\pi$. This will show that $\pi$ is a submersion and hence a smooth fibration.

\begin{lem} \label{l:DifferentialPi}
Let $g$ be a point in $\SO_0(n,1)$. Identify $T_g\SO_0(n,1)$ with the Lie algebra of right invariant vector fields on $\SO_0(n,1)$. Given $u$ such a vector field, denote by $X_u$ the induced vector field on $\H^n$. Then we have
\[\d_g \pi(u) = \left( \Id - \d_{\pi(g)} (g\circ f)\right)^{-1} \left(X_u(\pi(g)) \right)~.\]
\end{lem}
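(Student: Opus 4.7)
The plan is to differentiate the defining fixed-point relation $\pi(g) = g \cdot f(\pi(g))$ implicitly. I pick a smooth curve $g_t$ in $\SO_0(n,1)$ with $g_0 = g$ and $\dot g_0 = u$, and set $x_t = \pi(g_t)$, so that $x_0 = \pi(g)$ and $\dot x_0 = \d_g \pi(u)$. By definition of $\pi$, the relation $x_t = g_t \cdot f(x_t)$ holds for all $t$ near $0$, and this is what I will differentiate at $t = 0$.

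Writing the action map as $\Phi : \SO_0(n,1) \times \H^n \to \H^n$, $(h, y) \mapsto h \cdot y$, the chain rule applied to $x_t = \Phi(g_t, f(x_t))$ yields
\[
\dot x_0 \;=\; \partial_1 \Phi(g, f(x_0))(u) \;+\; \partial_2 \Phi(g, f(x_0))\bigl(\d_{x_0} f(\dot x_0)\bigr).
\]
The partial $\partial_2 \Phi(g, y)(v) = \d_y g(v)$ is the differential of $g$ as an isometry of $\H^n$, so the second term on the right equals $\d_{\pi(g)}(g \circ f)(\dot x_0)$. For the first term, $u$ as a right-invariant vector field is determined by $\xi := u(e) \in \so(n,1)$, and the explicit curve $g_t := \exp(t\xi)\, g$ satisfies $\dot g_0 = \xi\, g = u(g)$; then
\[
\partial_1 \Phi(g, f(x_0))(u) \;=\; \tfrac{d}{dt}\Big|_{t=0} \exp(t\xi) \cdot \bigl(g \cdot f(x_0)\bigr) \;=\; X_\xi(x_0) \;=\; X_u(\pi(g)),
\]
using the fixed-point identity $g \cdot f(x_0) = x_0$ and the very definition of the induced vector field $X_u$.

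Combining the two terms gives the linear equation
\[
\bigl(\Id - \d_{\pi(g)}(g \circ f)\bigr)(\dot x_0) \;=\; X_u(\pi(g)).
\]
To conclude, note that since $f$ is $\lambda$-Lipschitz with $\lambda < 1$ and $g$ is an isometry of $\H^n$, one has $\|\d_{\pi(g)}(g \circ f)\| \leq \lambda < 1$, so $\Id - \d_{\pi(g)}(g \circ f)$ is invertible by a Neumann series (the same estimate, via the implicit function theorem applied to the smooth map $(h, x) \mapsto h \cdot f(x)$, also legitimizes the smoothness of $\pi$ needed for the preceding differentiation). The formula of the lemma follows upon inverting. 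I expect the only mildly delicate step to be the bookkeeping of the right-invariant trivialization in the identification of the first partial: it is precisely the choice $g_t = \exp(t\xi)\, g$ (as opposed to $g\exp(t\xi)$) that makes the source term evaluate to $X_u$ at the correct base point $\pi(g)$ rather than at some translate.
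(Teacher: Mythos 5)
Your proof is correct and follows essentially the same route as the paper: differentiate the fixed-point identity $\pi(g_t)=g_t\cdot f(\pi(g_t))$ along the curve $g_t=\exp(t\xi)g$, isolate $\bigl(\Id-\d_{\pi(g)}(g\circ f)\bigr)\dot x_0=X_u(\pi(g))$, and invert using $\|\d_{\pi(g)}(g\circ f)\|\leq\lambda<1$. The only difference is presentational: the paper isolates the implicit-function-theorem step (existence and differentiability of the fixed point $t\mapsto x(t)$, together with the formula $\dot x(0)=(\Id-\d f_0)^{-1}(X_0)$) as a separate proposition, which is exactly the justification you supply in your parenthetical remark.
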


The proof will rely on the following application of the implicit function theorem:

\begin{prop} \label{l:PtFixeFonctionsImplicites}
Let $U$ be a neighbouhood of $0$ in $\R^n$ and $f: U \to \R^n$ a map such that $f(0) = 0$ and such that $\normop[\d f_0] < 1$ for some operator norm on $\R^n$. Let $X$ be a vector field on $U$, with flow $\Phi_t$ defined on $]-\epsilon, \epsilon[ \times V$ for some neighbourhood $V$ of $0$ and some $\epsilon >0$. 

Then, up to taking some smaller $\epsilon$, $\Phi_t \circ f$ has a unique fixed point $x(t)$ for all $t \in ]-\epsilon, \epsilon[$. Moreover, the function $t \mapsto x(t)$ is differentiable at $0$ and we have
\[ \dot{x}(0)= \left(\Id - \d f_0\right)^{-1}(X_0)~.\]
\end{prop}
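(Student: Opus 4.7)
The plan is to recast the problem as a zero-set of a smooth map and apply the implicit function theorem to
\[ F : (-\epsilon, \epsilon) \times U \longrightarrow \R^n, \qquad F(t, x) = \Phi_t(f(x)) - x, \]
whose zeros in the second variable are exactly the fixed points of $\Phi_t \circ f$. At the base point, $F(0, 0) = \Phi_0(f(0)) = 0$, and the partial differential in $x$ is $\partial_x F(0, 0) = \d f_0 - \Id$, which is invertible because $\|\d f_0\|_{\mathrm{op}} < 1$ (Neumann series). The implicit function theorem then produces a smooth curve $t \mapsto x(t)$ on some smaller interval satisfying $x(0) = 0$ and $F(t, x(t)) = 0$, and this is the desired fixed point $x(t)$.

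For the uniqueness claim, the IFT only gives local uniqueness near $x = 0$. I would promote it to genuine uniqueness by shrinking $V$ and $\epsilon$ so that $\|\d(\Phi_t \circ f)_x\|_{\mathrm{op}} < 1$ throughout $(-\epsilon, \epsilon) \times V$; this is possible by continuity of the differential together with the strict inequality at the origin. On such a ball $\Phi_t \circ f$ is a uniform contraction, so the Banach fixed-point theorem forbids any second fixed point and forces it to coincide with $x(t)$.

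For the derivative at $t = 0$, I would differentiate the identity $\Phi_t(f(x(t))) = x(t)$ at $t = 0$. By the chain rule,
\[ \partial_t \Phi_t(f(x(t)))\big|_{t=0} \;+\; \d_{f(0)}\Phi_0 \cdot \d f_0 \cdot \dot{x}(0) \;=\; \dot{x}(0), \]
and using $\Phi_0 = \Id$, $x(0) = f(0) = 0$, and $\partial_t \Phi_t|_{t=0} = X$, this reduces to $X_0 + \d f_0 \cdot \dot{x}(0) = \dot{x}(0)$, which rearranges to $\dot{x}(0) = (\Id - \d f_0)^{-1}(X_0)$. I do not anticipate any real obstacle here: the only point requiring minor care is the uniqueness step above, since the rest is a textbook application of the IFT and the chain rule.
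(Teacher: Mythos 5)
Your proposal is correct and follows essentially the same route as the paper: both apply the implicit function theorem to $F(t,x)=\Phi_t(f(x))-x$, using the invertibility of $\d f_0 - \Id$ (from $\normop[\d f_0]<1$) to solve for $x(t)$, and then read off $\dot{x}(0)=(\Id-\d f_0)^{-1}(X_0)$ from the linearized equation. Your extra paragraph upgrading local uniqueness to genuine uniqueness via a uniform contraction estimate on a convex neighbourhood is a welcome bit of care that the paper's proof leaves implicit.
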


\begin{proof}[Proof of proposition \ref{l:PtFixeFonctionsImplicites}]
Define
\[\function{F}{]-\epsilon, \epsilon[\times V}{\R^d}{(t,x)}{\Phi_t\circ f(x)-x~.}\]
We have
\[ \d_{(0,0)} F{(s,u)} = s X_0 + \d_0 f(u) - u~.\]
Since $\norm[\d f_0] < 1$, the linear map $\d F_{(0,0)}$ restricted to $\{0\}\times \R^d$ is bijective. Therefore $F$ is a submersion in a neighbouhood of $(0,0)$ and $F^{-1}(0)$ is thus a submanifold of dimension $1$, transverse to the fibers $t= \Cste$. The locus $F^{-1}(0)$ can thus be written locally as $\{(t,x(t)), t\in ]-\epsilon, \epsilon[\}$, where $x(t)$ is smooth and satisfies
\[X_0 + \d f_0(\dot{x}(0)) - \dot{x}(0) = 0~.\]
The conclusion follows.
\end{proof}

\begin{proof}[Proof of lemma \ref{l:DifferentialPi}]
Let us now compute the differential of $\pi$. Fix $g \in \SO_0(n,1)$ and $u$ a vector in the Lie algebra of $\SO_0(n,1)$. We identify $u$ with a tangent vector at $g$ by right translation, i.e. we also denote $u$ the vector
\[\dt_{|t=0} \exp(tu)g~.\]
Let $X_u$ be the vector field on $\H^n$ associated to $u$, i.e. the generator of the flow
\[\Phi_t(x) = \exp(tu) \cdot x~.\]

By definition, $\pi(g)$ is fixed by $g\circ f$ and, since $f$ is contracting, one has $\norm[\d_{\pi(g)} (g\circ f) ] < 1$. Applying lemma \ref{l:PtFixeFonctionsImplicites} to $g\circ f$ in a neighbourhood of $\pi(g)$ and to the vector field $X_u$, we deduce that $t \mapsto \pi(\exp(tu) g)$ is differentiable at $0$ and
\[\d_g \pi(u) = \dt_{|t=0} \pi(\exp(tu)g) = \left( \Id - \d_{\pi(g)} (g\circ f)\right)^{-1} \left(X_u(\pi(g)) \right)~.\]

Note that the map $u \mapsto X_u(\pi(g))$ is a surjective map from $\so(n,1)$ to $T_{\pi(g)}\H^n$ since the action of $\SO_0(n,1)$ on $\H^n$ is transitive. Moreover, the linear map $\left(\Id - \d_{\pi(g)} (g\circ f)\right)^{-1}$ is bijective. Hence $\pi$ is a submersion at $g$.

\end{proof}

In conclusion, the map $\pi$ induces a smooth fibration from $j\times\rho(\Gamma) \backslash \SO_0(n,1)$ to $j(\Gamma) \backslash \H^n$, whose fibers are images of $\SO(n)$ by left and right multiplication. With a slight abuse of notation, we will still denote by $\pi$ the induced fibration.

\subsection{Integration of the volume along the fibers}

In order to compute the volume of $j\times\rho(\Gamma) \backslash \SO_0(n,1)$, we will first integrate the volume form along the fibers of $\pi$.\\

Let $g$ be a point in $\SO_0(n,1)$. The Lie algebra $\so(n,1)$ decomposes as $\k_g \oplus \k_g^\perp$, where $\k_g$ is the Lie algebra of the stabilizer of $\pi(g)$ and $\k_g^\perp$ its orthogonal with respect to the Killing form. Recall that this Lie algebra is identified to the tangent space $T_g\SO_0(n,1)$ by right multiplication.

By lemma \ref{l:DifferentialPi}, the kernel of the differential of $\pi$ at $g$ is precisely $\k_g$, and $\d_g \pi$ thus induces an isomorphism from $\k_g^\perp$ to $T_{\pi(g)} \H^n$. One can thus define a volume form $\vol^{\pi}$ on $T_g\SO_0(n,1)$ by providing $\k_g$ with the volume associated to the restricted Killing metric and $\k_g^\perp$ with the pull-back of the volume form $\vol^{\H^n}$ by $\pi$.

\begin{prop}
The volume form $\vol^\pi$ defined above and the volume form $\vol^\Kill$ associated to the Killing metric are related by
\[\vol^\Kill_g = \det \left(\Id - \d_{\pi(g)} (g \circ f)\right) \vol^\pi_g~.\]
\end{prop}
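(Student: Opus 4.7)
The plan is to compute both volume forms in terms of the orthogonal decomposition $T_g\SO_0(n,1) \simeq \so(n,1) = \k_g \oplus \k_g^{\perp}$ (with $T_g\SO_0(n,1)$ identified to $\so(n,1)$ via right translation, as in Lemma~\ref{l:DifferentialPi}) and compare them factor by factor. Both forms agree on $\k_g$ by construction, so the entire content is the comparison of their restrictions to $\k_g^{\perp}$.

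First, I would invoke Lemma~\ref{l:DifferentialPi} to factor the restriction of $\d_g\pi$ to $\k_g^{\perp}$ as
\[
\d_g\pi|_{\k_g^{\perp}} \;=\; \bigl(\Id-\d_{\pi(g)}(g\circ f)\bigr)^{-1}\circ \iota_g,
\]
where $\iota_g:\k_g^{\perp}\to T_{\pi(g)}\H^n$ is the infinitesimal-action map $u\mapsto X_u(\pi(g))$. Here I would recall that, by the very convention on the Killing metric chosen in Section~\ref{ss:Conventions}, when $\pi(g)$ is the base point $x_0$ the map $\iota_g$ is a linear isometry from $\k_g^{\perp}=\so(n)^{\perp}$ onto the tangent space of $\H^n$ endowed with its curvature $-1$ metric. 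For a general $\pi(g)$, conjugating by any element of $\SO_0(n,1)$ sending $x_0$ to $\pi(g)$ and using bi-invariance of the Killing metric shows that $\iota_g$ remains an isometry.

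Next I would evaluate the pull-back: for any $u_1,\dots,u_n\in\k_g^{\perp}$,
\begin{align*}
\pi^{*}\vol^{\H^n}(u_1,\dots,u_n)
&= \vol^{\H^n}\!\bigl((\Id-\d_{\pi(g)}(g\circ f))^{-1}\iota_g(u_1),\dots,(\Id-\d_{\pi(g)}(g\circ f))^{-1}\iota_g(u_n)\bigr)\\
&= \det\!\bigl((\Id-\d_{\pi(g)}(g\circ f))^{-1}\bigr)\cdot \vol^{\H^n}\!\bigl(\iota_g(u_1),\dots,\iota_g(u_n)\bigr)\\
&= \det\!\bigl((\Id-\d_{\pi(g)}(g\circ f))^{-1}\bigr)\cdot \vol^{\kappa}|_{\k_g^{\perp}}(u_1,\dots,u_n),
\end{align*}
the last equality using that $\iota_g$ is an isometry for the Killing metric on $\k_g^{\perp}$ and the hyperbolic metric on $T_{\pi(g)}\H^n$. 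By definition $\vol^{\pi}_g$ is the product of $\vol^{\kappa}|_{\k_g}$ with $\pi^{*}\vol^{\H^n}|_{\k_g^{\perp}}$, and $\vol^{\kappa}_g$ is the product of $\vol^{\kappa}|_{\k_g}$ with $\vol^{\kappa}|_{\k_g^{\perp}}$, so inverting the relation above yields exactly
\[
\vol^{\kappa}_g = \det\!\bigl(\Id-\d_{\pi(g)}(g\circ f)\bigr)\,\vol^{\pi}_g.
\]

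The main subtlety is really the orientation/sign: for the identity to hold with $\det$ rather than $|\det|$, the two volume forms must be comparably oriented, which follows because $f$ is contracting, so $\d_{\pi(g)}(g\circ f)$ has all eigenvalues of modulus $<1$, hence $\Id-\d_{\pi(g)}(g\circ f)$ has spectrum in the right half-plane and its determinant is positive. The other point to watch is the isometric nature of $\iota_g$, which I would verify carefully by reducing to the base point via left multiplication and exploiting bi-invariance of $\kappa$ and the specific normalization recalled in Section~\ref{ss:Conventions}; once this is in place, the determinant computation is the whole content of the proposition.
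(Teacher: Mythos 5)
Your argument is correct and follows essentially the same route as the paper's proof: decompose $\so(n,1)=\k_g\oplus\k_g^{\perp}$, use Lemma~\ref{l:DifferentialPi} to write $\d_g\pi|_{\k_g^{\perp}}$ as $\left(\Id-\d_{\pi(g)}(g\circ f)\right)^{-1}$ composed with the isometric infinitesimal-action map (the paper phrases this by choosing an orthonormal basis $(u_i)$ of $\k_g^{\perp}$ and noting that the Killing fields $X_{u_i}$ are orthonormal in $T_{\pi(g)}\H^n$), and read off the determinant. Your explicit check that $\det\left(\Id-\d_{\pi(g)}(g\circ f)\right)>0$ because $f$ is contracting is a welcome addition, since the paper only says ``up to the sign'' and leaves the orientation point implicit.
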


\begin{proof} 
Let $(u_1, \ldots u_n)$ be an orthonormal basis of $\k_g^\perp$ and $(v_1, \ldots, v_{n(n-1)/2})$ an orthonormal basis of $\k_g$. Then, up to the sign, one has
\[\vol^\kappa(u_1, \ldots u_n, v_1, \ldots, v_{n(n-1)/2}) = 1~.\]
On the other side, the Killing fields $X_i = X_{u_i}$ on $\H^n$ form an orthonormal basis of $T_{\pi(g)} \H^n$ and thus satisfy
\[\vol^{\H^n}_{\pi(g)}(X_1, \ldots, X_n)= 1~.\] 

By lemma \ref{l:DifferentialPi}, we know that
\[\d_g \pi(u_i) = \left(\Id - \d_{\pi(g)} (g \circ f)\right)^{-1}(X_i)~.\]
Therefore,
\[\vol^{\H^n}_{\pi(g)}(\d_g\pi(u_1),\ldots, \d_g \pi(u_n)) = \det\left(\Id - \d_{\pi(g)} (g \circ f)\right)^{-1} \vol^{\H^n}_{\pi(g)}(X_1, \ldots, X_n)~,\]
hence
\[\vol^{\Kill}_g = \det\left(\Id - \d_{\pi(g)} (g \circ f)\right) \vol^\pi_g~.\]
\end{proof}

In order to integrate this identity along the fibers, we will need the following linear algebra lemma:

\begin{lem} \label{l:LinearAlgebra}
Let $\vol^K$ denote the volume form of $\SO(n)$. Then for any matrix $A \in \M_n(\R)$, we have
\[\int_{\SO(n)} \det\left( \I_n - U A \right) \d \vol^K(U) = \V_n \left(1 + (-1)^n \det(A) \right)~.\]
\end{lem}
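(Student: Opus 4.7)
The plan is to expand $\det(\I_n - UA)$ as a polynomial in the elementary symmetric functions of $UA$ and integrate term by term; bi-invariance of the Haar measure on $\SO(n)$ will kill every intermediate term, leaving only the two extremal ones.

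Concretely, recall the identity
\[\det(\I_n - M) = \sum_{k=0}^n (-1)^k \tr \Lambda^k(M)\]
for $M \in \M_n(\R)$, where $\Lambda^k(M) \in \mathrm{End}(\Lambda^k \R^n)$ is the induced action of $M$ on the $k$-th exterior power. Applying this with $M = UA$ and using multiplicativity $\Lambda^k(UA) = \Lambda^k(U) \Lambda^k(A)$, one obtains
\[\int_{\SO(n)} \det(\I_n - UA)\, \d\vol^K(U) = \sum_{k=0}^n (-1)^k \tr\bigl(T_k \circ \Lambda^k(A)\bigr),\]
where $T_k := \int_{\SO(n)} \Lambda^k(U)\, \d\vol^K(U) \in \mathrm{End}(\Lambda^k \R^n)$. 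The extremal cases are immediate: $T_0 = \V_n$ on the one-dimensional space $\Lambda^0 \R^n \cong \R$, and since $\det U = 1$ for $U \in \SO(n)$, $\Lambda^n(U) = \mathrm{id}$ on $\Lambda^n \R^n \cong \R$, so $T_n = \V_n$; moreover $\tr \Lambda^n(A) = \det(A)$. These two terms together contribute $\V_n(1 + (-1)^n \det(A))$.

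The main obstacle, and the only non-formal step, is to prove that $T_k = 0$ for every $1 \leq k \leq n-1$. By left and right invariance of $\vol^K$ under $\SO(n)$, the endomorphism $T_k$ satisfies $\Lambda^k(V) \cdot T_k = T_k = T_k \cdot \Lambda^k(V)$ for every $V \in \SO(n)$. In particular, the image of $T_k$ is contained in the subspace of $\SO(n)$-invariant vectors in $\Lambda^k \R^n$. Classical invariant theory for $\SO(n)$ tells us that $(\Lambda^k \R^n)^{\SO(n)} = 0$ for $0 < k < n$: the ring of $\SO(n)$-invariants in the tensor algebra of $\R^n$ is generated by the Euclidean form (of degree $2$) and the volume form (of degree $n$), neither of which lives in $\Lambda^k \R^n$ for $0 < k < n$. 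Hence $T_k = 0$.

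Collecting the two surviving contributions yields
\[\int_{\SO(n)} \det(\I_n - UA)\, \d\vol^K(U) = \V_n + (-1)^n \V_n \det(A) = \V_n\bigl(1 + (-1)^n \det(A)\bigr),\]
as claimed. If one prefers to bypass the invariant-theoretic input, the vanishing of $T_k$ can alternatively be obtained from Schur's lemma: for $0 < k < n$, the representation $\Lambda^k \R^n$ of $\SO(n)$ is a sum of non-trivial irreducibles, so on each irreducible component the $\SO(n)$-equivariant endomorphism $T_k$ is a scalar, and its trace is forced to vanish because $\int_{\SO(n)} \tr \Lambda^k(U)\, \d\vol^K(U)$ computes $\V_n$ times the multiplicity of the trivial representation in $\Lambda^k \R^n$, which is zero.
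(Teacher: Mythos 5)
Your proof is correct, and it reaches the result by a genuinely different mechanism from the paper's, even though the initial expansion is the same in substance: your identity $\det(\I_n - UA) = \sum_{k=0}^n (-1)^k \Tr \Lambda^k(UA)$ is exactly the paper's expansion into principal minors, since $\Tr\Lambda^k(UA) = \sum_{|P|=n-k}\det((UA)_P)$ in the paper's notation. Where the two arguments diverge is in how the middle terms are killed. The paper does it by hand: for each proper nonempty $P$ it exhibits an explicit diagonal matrix $D\in\SO(n)$ with exactly two entries equal to $-1$, chosen so that $\det((DB)_P) = -\det(B_P)$ for every $B$, and left-invariance of $\vol^K$ then forces $\int_{\SO(n)}\det((UA)_P)\,\d\vol^K(U) = 0$ minor by minor. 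You instead package the $k$-th term as $T_k\circ\Lambda^k(A)$ with $T_k = \int_{\SO(n)}\Lambda^k(U)\,\d\vol^K(U)$, observe that invariance of the measure gives $\Lambda^k(V)\,T_k = T_k$ for all $V\in\SO(n)$, so the image of $T_k$ lies in the invariant subspace of $\Lambda^k\R^n$, and invoke the classical fact that this subspace vanishes for $0<k<n$. Your route is more conceptual and explains \emph{why} the cancellation happens (no $\SO(n)$-invariant $k$-vectors in middle degrees), at the price of importing a piece of invariant theory; the paper's route is entirely elementary and self-contained. One small caveat: your closing Schur-type alternative only establishes $\Tr(T_k)=0$, which does not by itself give $T_k=0$ when $\Lambda^k\R^n$ is reducible (e.g.\ $k=n/2$); this is harmless, since your main argument (image of $T_k$ contained in the invariants, which vanish) is already complete.
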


\begin{proof}
If $B$ is some $n\times n$ matrix and $P$ a subset of $\{1,\ldots, n\}$, let us denote by $B_P$ the square matrix of size $n- |P|$ obtained from $B$ by removing all lines and columns whose indices are in $P$. Then one has
\[\det(\lambda \I_n - B) = \sum_{k=0}^n \left(\sum_{|P| = k} \det(B_P) \right) (-1)^{n-k}\lambda^k~.\]
Therefore,
\begin{equation} \label{eq:PolynomeCaracteristique}
\int_{\SO(n)} \det\left( \I_n - U A \right) \d \vol^K(U) = \sum_{k=0}^n \left(\sum_{|P| = k} \int_{\SO(n)} \det((UA)_P) \d \vol^K(U)~. \right)\end{equation}
Now, for any $P \subset \{1,\ldots, n\}$ with $1 \leq |P| \leq n-1$, let $D$ be the diagonal matrix with coefficients $(d_1, \ldots, d_n)$ where $d_i$ is $-1$ if $i = \inf P$ or $i = \inf \{1,\ldots, n\} \backslash P$ and $d_i = 1$ otherwise. Then $D$ is in $\SO(n)$ and, by construction, 
\[\det((DB)_P) = - \det(B_P)\]
for all matrix $B$. (Indeed, the multiplication by $D$ multiplies the first line of $B_P$ by $-1$.)

We thus have
\begin{eqnarray*}
\int_{\SO(n)} \det((UA)_P) \d \vol^K(U) & = & \int_{\SO(n)} \det((DUA)_P) \d \vol^K(U) \\
\ & = & \int_{\SO(n)} -\det((UA)_P) \d \vol^K(U)~,
\end{eqnarray*}
hence
\[\int_{\SO(n)} \det((UA)_P) \d \vol^K(U) = 0~.\]
The only non vanishing terms in the sum \eqref{eq:PolynomeCaracteristique} thus appear when $P= \emptyset$ or $P = \{1,\ldots, n\}$. Therefore,
\begin{eqnarray*}
\int_{\SO(n)} \det\left( \I_n - U A \right) \d \vol^K(U) & = & \int_{\SO(n)} \d \vol^K(U) \\
\ & \ & + (-1)^n \int_{\SO(n)} \det(UA) \d \vol^K(U) \\
\ & = & (1+(-1)^n \det(A)) \int_{\SO(n)} \d \vol^K(U) \\
\ & = & \V_n \left(1 + (-1)^n \det(A) \right)~.
\end{eqnarray*}

\end{proof}

We can now complete the proof of theorem \ref{t:VolumeGKQuotients}. Let us denote by $B$ the manifold $j(\Gamma) \backslash \H^n$ and by $E$ the manifold $(j\times \rho)(\Gamma) \backslash \SO_0(n,1)$. Let $U$ be an open set of $B$ over which the fibration $\pi: E\to B$ admits a section $s$. One can then identify the open set $V= \pi^{-1}(U)$ with $K \times U$, where $K$ is the stabilizer of a base point $x_0$ in $U$. Such a trivialization is given by
\[(k,x) \mapsto hkh^{-1} s(x),\]
where $h$ is any element of $\SO_0(n,1)$ sending $x_0$ to $x$ (so that $hKh^{-1}$ is the stabilizer of $x$). Through this trivialization, the volume $\vol^\pi$ identifies with the product volume on $K \times U$ (where $K \simeq \SO(n)$ is provided with the volume $\vol^K$). On the other side, we have
\[\vol^\Kill_{(k,x)} = \det\left(\Id - \d_x (hkh^{-1}s(x) \circ f)\right) \vol^\pi_{(k,x)}~.\]

When $k$ spans $K$, $hkh^{-1}$ spans $\SO(T_x\H^n)$. By lemma \ref{l:LinearAlgebra}, we thus obtain

\begin{eqnarray*}
\int_K \det\left(\Id - \d_x (hkh^{-1}s(x) \circ f)\right) \vol^K &=& \V_n \left(1 + (-1)^n \det(\d_x s(x)\circ f) \right) \\
\ & = & \V_n \left(1 + (-1)^n \Jac_f(x) \right)~,
\end{eqnarray*}
where $\Jac_f(x)$ is defined by
\[f^*\left(\vol^{\H^n}_{f(x)}\right) = \Jac_f(x) \vol^{\H^n}_x~.\]

Therefore, 
\[\int_{K\times U} \vol^{\Kill} = \int_U \V_n \left( 1 + (-1)^n \Jac_f(x)\right) \vol^{\H^n}~.\]
We finally obtain
\begin{eqnarray*}
\int_E \vol^{\Kill} & = & \int_B \V_n \left(1 + (-1)^n \Jac_f(x)\right)\vol^{\H^n}\\
\  & = & \V_n \int_B \vol^{\H^n} + (-1)^n f^*\left( \vol^{\H^n}\right)~.
\end{eqnarray*}

This concludes the proof of theorem \ref{t:MainThmBis}.

\section{Some open questions} \label{s:OpenQuestions}

In this last section, we discuss further some questions raised by this work.

\subsection{Are finite covolume discrete groups geometrically finite ?} \label{sss:FinitelyGenerated}

It is a famous result of Kazhdan and Ragunathan that lattices of a Lie group are finitely presented. However, to our knowledge, there is no answer to the following more general question:

\begin{ques}
Let $G$ be a Lie group and $X$ a $G$-homogeneous space such that $G$ preserves a volume form on $X$. Let $\Gamma$ be a discrete subgroup of $G$ acting properly discontinuously on $X$ such that $\Gamma \backslash X$ has finite volume. Is $\Gamma$ finitely generated?
\end{ques}

We expect this to be true for quotients of $\SO_0(n,1)$, but we fell short of proving it here. The main issue is that Gu\'eritaud--Kassel's theorem \emph{assumes} that $\Gamma$ is a geometrically finite subgroup of $\SO_0(n,1)$ (in particular it is finitely generated). 

If we remove this assumption, there are examples of proper actions on $\SO_0(n,1)$ given by $j\times \rho(\Gamma)$, where $j$ is discrete and faithful and for which there is a $(j,\rho)$-equivariant map $f:\H^n \to \H^n$ satisfying
\[d(f(x), f(y)) < d(x,y)\]
for all $x\neq y$, but that is not $\lambda$-Lipschitz for some $\lambda < 1$.

Conversely, if such a map exists, then $j\times \rho(\Gamma)$ acts properly discontinuously on the domain $U$ formed by all the $g\in \SO_0(n,1)$ such that $g\circ f$ has a fixed point. The integral
\[\int_{j(\Gamma) \backslash \H^n} \vol^{\H^n} + (-1)^n f^*\vol^{\H^n}\]
then computes the volume of $j\times \rho(\Gamma) \backslash U$. Whether this domain $U$ is the whole hyperbolic space $\H^n$ depends on how contracting the map $f$ is.

Interestingly, when $n=2$, it can happen that $f$ is close enough to being an orientation reversing isometry asymptotically, so that $\vol^{\H^2} + f^*\vol^{\H^2}$ is integrable and thus $j\times \rho(\Gamma) \backslash U$ has finite volume. However, we expect this to imply that $U \varsubsetneq \SO_0(2,1)$. More generally, we conjecture:

\begin{conj}
Let $\Gamma$ be a discrete group and $j\times \rho$ a faithful representation of $\Gamma$ into $\SO_0(n,1)\times \SO_0(n,1)$ such that $j\times \rho(\Gamma)$ acts properly discontinuously on $\SO_0(n,1)$ with finite covolume. Then, up to switching the factors, $j$ is discrete and faithful and $j(\Gamma)$ is a lattice in $\SO_0(n,1)$ (in particular it is geometrically finite).
\end{conj}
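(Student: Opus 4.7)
The plan is to reduce the conjecture to the geometrically finite setting handled by Theorem~\ref{t:GueritaudKassel} and Corollary~\ref{c:FiniteVolume}, in three stages.

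\emph{Stage 1 (recover that $j$ is discrete and faithful).} By Theorem~\ref{t:KobayashiKassel} (applied after passing to a torsion-free finite index subgroup of $j\times\rho(\Gamma)$, which exists for any discrete subgroup of $\SO_0(n,1)^2$ by a Selberg-type argument on the matrix coefficients), up to switching the factors we may assume $j$ is discrete and faithful. The remaining task is to show that $j(\Gamma)\backslash\H^n$ has finite hyperbolic volume: any discrete faithful isometric action on $\H^n$ with finite-volume quotient is by definition a lattice, and every lattice in $\SO_0(n,1)$ is known to be geometrically finite via the classical thick--thin decomposition.

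\emph{Stage 2 (upgrade properness to uniform strict domination).} The core step is to produce, \emph{without} assuming $\Gamma$ finitely generated, a $(j,\rho)$-equivariant map $f:\H^n\to\H^n$ that is $\lambda$-Lipschitz for some $\lambda<1$. My plan is to combine the Benoist--Kobayashi properness criterion --- which translates properness of $j\times\rho(\Gamma)$ into a quantitative inequality between the Cartan projections of $j(\gamma)$ and $\rho(\gamma)$ --- with the finite-covolume hypothesis. The discussion preceding the conjecture shows that if the domination is only pointwise strict but not uniform, then the fixed-point construction of Section~\ref{ss:DescriptionGKQuotients} (the map $g\mapsto$ unique fixed point of $g\circ f$) is defined only on a proper open subset $U\subsetneq\SO_0(n,1)$. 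Conversely, proper action of $j\times\rho(\Gamma)$ on \emph{all} of $\SO_0(n,1)$ should force this fixed-point construction to extend globally, which in turn should force $f$ to be uniformly contracting. The finite-covolume assumption is meant to play the role of the missing geometric finiteness in Gu\'eritaud--Kassel's original compactness argument.

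\emph{Stage 3 (volume comparison).} Once a $\lambda$-Lipschitz contracting $f$ is available, the bundle construction of Section~\ref{ss:DescriptionGKQuotients} extends verbatim, and Theorem~\ref{t:MainThmBis} yields, exactly as in the proof of Corollary~\ref{c:FiniteVolume},
\[\Vol\bigl((j\times\rho)(\Gamma)\backslash \SO_0(n,1)\bigr)\;\geq\;(1-\lambda^n)\,\V_n\,\Vol\bigl(j(\Gamma)\backslash\H^n\bigr).\]
Finite covolume on the left then forces $j(\Gamma)\backslash\H^n$ to have finite volume, closing the argument via Stage~1.

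\emph{Main obstacle.} Stage~2 is where the real content lies. The remarks immediately before the conjecture exhibit $(j,\rho)$-equivariant maps $f$ that are pointwise strictly contracting without being $\lambda$-Lipschitz, and for which the proper domain $U$ is strict. The conjecture amounts to ruling out this pathology under the finite-covolume assumption. A concrete attack I would try is by contradiction: if no uniform $\lambda<1$ exists, extract a sequence $(\gamma_k)$ in $\Gamma$ with $\ell_\rho(\gamma_k)/\ell_j(\gamma_k)\to 1$; translate this into Cartan projections of $j(\gamma_k)$ and $\rho(\gamma_k)$ that become asymptotically equal, and use this to produce elements $g_k\in\SO_0(n,1)$ whose $(j\times\rho)(\Gamma)$-orbits accumulate on a fixed compact set, contradicting proper discontinuity. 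The finite-covolume hypothesis should intervene through the existence of a finite-volume fundamental domain forcing that accumulation --- and making this intervention rigorous is the delicate part.
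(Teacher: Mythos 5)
This statement is not proved in the paper at all: it appears in Section \ref{s:OpenQuestions} as an open conjecture, and the author explicitly writes that he ``fell short of proving it here''. There is therefore no proof of the paper to compare yours against; I can only assess your plan on its own terms, and it does not close the gap --- Stage~2, which you correctly identify as the core, is a restatement of the conjecture rather than an argument.

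Concretely, the fatal point is your claim that properness of the $j\times\rho(\Gamma)$-action on all of $\SO_0(n,1)$ ``should force'' the fixed-point construction to extend globally and hence force $f$ to be uniformly contracting. The paragraph of the paper immediately preceding the conjecture asserts the opposite: when $\Gamma$ is not geometrically finite there exist \emph{proper} actions $j\times\rho(\Gamma)$ on $\SO_0(n,1)$ whose equivariant maps are pointwise contracting but not $\lambda$-Lipschitz for any $\lambda<1$; this is exactly why Theorem \ref{t:GueritaudKassel} fails without geometric finiteness. So properness alone cannot yield Stage~2, everything must come from the finite-covolume hypothesis, and you give no mechanism by which it enters beyond the phrase ``should intervene''. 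Your contradiction sketch is moreover circular: deducing from ``no uniform $\lambda<1$'' a sequence $\gamma_k$ with $\ell_\rho(\gamma_k)/\ell_j(\gamma_k)\to 1$ uses the equivalence between the optimal Lipschitz constant of equivariant maps and the supremum of length ratios, which is precisely the part of Gu\'eritaud--Kassel's work that requires geometric finiteness --- the hypothesis you are trying to dispense with. And even granted such a sequence, the Benoist--Kobayashi criterion concerns the \emph{difference} of Cartan projections going to infinity, not their ratio staying below $1$, so a ratio tending to $1$ does not produce orbits accumulating on a compact set; again, the paper's examples show properness is simply not contradicted.

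Two secondary issues: Selberg's lemma requires finite generation, so you cannot pass to a torsion-free finite-index subgroup of an arbitrary discrete subgroup of $\SO_0(n,1)\times\SO_0(n,1)$ as claimed in Stage~1 (and Theorem \ref{t:KobayashiKassel} is stated for torsion-free groups). Stage~3 is fine, but only conditionally: the fibration of Section \ref{ss:DescriptionGKQuotients} and the lower bound $\Vol\geq(1-\lambda^n)\V_n\Vol(j(\Gamma)\backslash\H^n)$ from Corollary \ref{c:FiniteVolume} both presuppose the uniformly contracting $f$ whose existence is the whole problem.
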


\subsection{What are the values of the volume of Gu\'eritaud--Kassel's quotients?}

Let us fix a torsion-free lattice $\Gamma$ in $\SO_0(n,1)$. It is natural to ask what values can be taken by the volume of quotients of $\SO_0(n,1)$ by some action of $\Gamma$. Given Theorem \ref{t:VolumeGKQuotients}, this boils down to understanding the possible values for the volume of a representation $\rho: \Gamma \to \SO_0(n,1)$ which is strictly dominated by a discrete and faithful representation.\\

The case $n=2$ is well-understood. It $\Gamma$ is not cocompact, Theorem \ref{t:NonRigidity} shows that quotients of $\SO_0(2,1)$ can have any volume in the open interval $(0, 2\pi \Vol(j))$, where $j$ is a Fuchsian representation of $\Gamma$. On the contrary, if $\Gamma$ is cocompact, then the volume of a representation $\rho$ is an integral multiple of $2\pi$. If, moreover, $\rho$ is strictly dominated by $j$, then one has $|Vol(\rho)| < \Vol(j)$ (note that the weak inequality is true without any hypothesis on $\rho$ by Milnor--Wood inequality). It follows that
\[\frac{1}{2\pi} \Vol\left(j\times \rho(\Gamma) \backslash \SO_0(2,1) \right) \in  \{1, \ldots, 4g-5\}~,\]
where $g$ is the genus of the surface $j(\Gamma) \backslash \H^2$.

Conversely, Salein \cite{Salein00} proved the existence of pairs $(j,\rho)$ with $j$ Fuchsian and $\rho$ strictly dominated by $j$ of any non-extremal Euler class. Therefore, $\frac{1}{2\pi} \Vol\left(j\times \rho(\Gamma) \backslash \SO_0(2,1) \right)$ can take all the integral values between $1$ and $4g-5$.\\

The picture is much more blury in higher dimension. When $n$ is even, it is known that twice the volume of a representation of $\Gamma$ is an integral multiple of the volume of the sphere of dimension $n$. (When $\Gamma$ is cocompact, it follows from the Chern--Gauss--Bonnet theorem. It was generalized to any lattice by Bucher--Burger--Iozzi \cite{BBI14}.) However, it is not known which of this values are actually realized, let alone by representations strictly dominated by a Fuchsian one. In odd dimension, the only thing that is known is that there are only finitely many possible values, since the space of cusp deteriorating representations has finitely many connected components.\\

So far, the only known examples of Gu\'eritaud--Kassel's quotients when $n \geq 3$ are constructed by taking the representation $\rho$ to be a continuous deformation of the identity. In particular, such a representation has volume~$0$. This raises the following question:

\begin{ques}
Let $\Gamma$ be a lattice in $\SO_0(n,1)$, $n\geq 3$ and $j:\Gamma \to \SO_0(n,1)$ the inclusion. Does there exist a representation $\rho$ strictly dominated by $j$ whose volume is not zero?
\end{ques}

\bibliographystyle{plain}
\bibliography{biblio}

\end{document}